\documentclass[11pt,a4paper]{article}

\setlength{\textwidth}{16cm}
\setlength{\textheight}{23cm}
\setlength{\topmargin}{-1cm}
\setlength{\oddsidemargin}{-1mm}
\setlength{\evensidemargin}{-1mm}
\raggedbottom

\usepackage{amsfonts,amsmath,layout}
\usepackage{mathrsfs}  
\usepackage{amssymb,mathabx,amsfonts,amsthm,amscd,stmaryrd,dsfont,esint,upgreek,constants,todonotes}

\usepackage[utf8]{inputenc}
\usepackage{makeidx}
\usepackage[T1]{fontenc} 
\usepackage{amsmath}
\usepackage{amsthm}
\usepackage{amsfonts}
\usepackage{amssymb}
\usepackage{alltt}
\usepackage{color}
\usepackage{graphicx}







\newcommand{\R}{\mathbb R}




\def\dk{{\bf d}_1}
\def\J{{\mathcal J}}
\newcommand{\be}{\begin{equation}}
\newcommand{\ee}{\end{equation}}
\def\1{{\bf 1}}

\def\ep{\epsilon}

\def\b{\beta}
\def\Dt0{{\bf D}(t_0)}

\def\E{{\bf E}}

\def\to{\rightarrow}

\def\ds{\displaystyle}

\def\a{\alpha}
\def\b{\beta}
\def\g{\gamma}

\def\l{\lambda}


\def\ds{\displaystyle}

\def \to{\rightarrow}

\def \R {\mathbb{R}}

\def\dk{{\bf d}_1}
\def\dive{{\rm div}}

\def \ep{\varepsilon}

\def\E{\mathbb E}

\definecolor{ProcessBlue}{cmyk}{1,0,0,0.40}



\newtheorem{Theorem}{Theorem}[section]

\newtheorem{Proposition}[Theorem]{Proposition}
\newtheorem{Lemma}[Theorem]{Lemma}

\newtheorem{Remark}{Remark}[section]

\newtheorem{Example}{Example}[section]
\begin{document}
\title{Convergence of some  Mean Field Games systems\\to aggregation and flocking models}
\author{Martino Bardi\thanks{Department of Mathematics ``T. Levi-Civita'', University of Padova, Via Trieste 63, 35121 Padova, Italy - bardi@math.unipd.it}
 and Pierre Cardaliaguet\thanks{Universit\'e Paris-Dauphine, PSL Research University, Ceremade, 
Place du Mar\'echal de Lattre de Tassigny, 75775 Paris cedex 16 - France - cardaliaguet@ceremade.dauphine.fr }}
\date{\today}
\maketitle
\abstract{For two classes of Mean Field Game systems we study the convergence of solutions as the interest rate in the cost functional becomes very large, modeling agents caring only about a very short time-horizon, and the cost of the control becomes very cheap. The limit in both cases is a single  first order integro-partial differential equation for the evolution of the mass density. The first model is a 2nd order MFG system with vanishing viscosity, and the limit is an aggregation equation. The result has an interpretation for models of collective animal behaviour and of crowd dynamics. The second class of problems are 1st order MFGs of acceleration and the limit is the kinetic equation associated to the Cucker-Smale model. The first problem is analyzed by PDE methods, whereas the second is studied by variational methods in the space of probability measures on trajectories.}


 \tableofcontents

\section*{Introduction}

The aim of this work is to discuss, in some particular settings, how models involving crowds of rational agents 
 continuous in space-time can degenerate to  agent based models as the agents become less and less rational. The models of rational agents used in this paper are the  Mean Field Games 
  (MFG), introduced by Lasry and Lions \cite{LL07mf} (see also \cite{HMC}). They describe optimal control problems with infinitely many infinitesimal agents who interact through their distribution. 

Our results  are
 inspired on one hand by the last part of \cite{LLB}, in which the authors show how to derive a McKean-Vlasov equation from a mean field game system and, on the other hand, by \cite{DHL}  (see also \cite{Ba}) which discusses how 
  multi-agent control problems in which the players have limiting anticipation converge to aggregation models. 
Let us briefly recall the content of both papers. In \cite{LLB}, the authors study MFG systems 
 of the form 
\be\label{modelBLL}
\left\{ \begin{array}{l}
-\partial_t u_\l -\nu\Delta u_\l +H(x,Du_\l,m_\l(t)) +\lambda u= 0\; {\rm in }\; \R^d\times (0,T), \\
\partial_t m_\l-\nu\Delta m_\l -{\rm div}(m_\l D_pH(x,Du_\l,m_\l(t))=0\; {\rm in }\; \R^d\times (0,T), \\
u_\l(T,x)= u_T(x), \; m_\l(0)=m_0  \;{\rm in}\; \R^d. 
\end{array}\right.
\ee
Here  $\nu>0$ is fixed, $\lambda>0$ is a large parameter which describes the impatience of the players and $H=H(x,p,m)$ is the Hamiltonian of the problem which includes interaction terms between the players. Under suitable assumptions on the data, \cite{LLB} states that,  as $\lambda$ tends to $\infty$ and up to subsequences, $u_\l, Du_\l \to 0$ and $m_\l$ converges 
 to a solution of the  McKean-Vlasov equation
$$
\left\{ \begin{array}{l}
\partial_t m-\nu\Delta m -{\rm div}(mD_pH(x,0,m(t))=0\qquad {\rm in }\; \R^d\times (0,T), \\
 m(0)=m_0 \qquad {\rm in}\; \R^d.
\end{array}\right.
$$
Possible variants and extensions (to MFG models with relative running costs and to higher order approximation) are also discussed in \cite{LLB}. 

Although \cite{DHL} shares some common features with \cite{LLB}, it  is quite different. It proposes a  deterministic model in which, as in \cite{LLB}, the agents have little rationality, in the sense that they anticipate only on a short horizon (here through time discretization). On the other hand, and this is in contrast with \cite{LLB}, the agents are supposed to pay little for their move. The paper \cite{DHL} explains, at least at a heuristic level, that the optimal feedback control of each agent should converge to the gradient descent of the running cost, which the authors call
 ``Best Reply Strategy''. They also discuss the limit of the distribution of agents as their number 
  goes to infinity and the related $1$st order
 McKean-Vlasov equation.

In the present paper we consider a continuous time variant of the model in \cite{DHL} which contains its two main features: the fact that the players 
minimize a cost on a very short horizon, that we model  as in \cite{LLB} by a large discount factor, and the fact that they pay little for their moves.  
To fit also better with aggregation or  kinetic models, we  work with problems with a vanishing viscosity ($\nu=\nu_\l\to 0^+$ as $\l\to+\infty$)
 and in infinite horizon. In particular, our result makes rigorous the approach of \cite{DHL}.
 
We prove two convergence results. In the first one, our  model (in its simplest version) takes the form 
\be\label{mfgIntro}
\left\{ \begin{array}{l}
\ds -\partial_t u_\lambda -\nu_\l\Delta u_\l+\lambda u_\lambda   +\frac\l 2 |Du_\lambda|^2 = F(x,m_\lambda(t)) \qquad {\rm in }\; \R^d\times (0,+\infty)
\\
\ds \partial_t m_\lambda-\nu_\l\Delta m_\l -\dive(m_\lambda
\l Du_\lambda 
)=0 \qquad {\rm in }\; \R^d\times (0,+\infty)\\
\ds m_\lambda(0)=m_0,\qquad {\rm in }\;  \R^d, \qquad u_\lambda\; {\rm bounded} .
\end{array}\right.
\ee
Under  some natural assumptions on $F$ (typically, continuous on $\R^d\times {\mathcal P}_2(\R^d)$ and uniformly Lipschitz continuous and semi-concave in the space variable), we show that, as $\lambda$ tends to infinity (meaning that players become more and more myopic and that their control is increasingly cheap) and along subsequences, $m_\l$ converges to a solution $m$ of the 
aggregation model 
\be
\label{aggrIntro}
\left\{ \begin{array}{l}
\ds \partial_t m -\dive(m D_xF(x,m)
)=0 \qquad {\rm in }\; \R^d\times (0,+\infty)\\
\ds m(0)=m_0,\qquad {\rm in }\;  \R^d .
\end{array}\right.
\ee
Moreover, the optimal feedback $-\l Du_\l$ for the generic agent in the MFG converges a.e. to the vector field $-D_xF(\cdot,m)$, giving the gradient descent of the running cost corresponding to  the limit distribution of agents $m$.
To compare with \cite{LLB}, let us note that, in the case where $H(x,p,m)= \frac12 |p|^2-F(x,m)$, the limit of \eqref{modelBLL} is a simple diffusion equation, while in our setting this limit is non trivial. 

The limit equation in \eqref{aggrIntro} covers most examples of the so-called Aggregation Equation
\[
\partial_t m + \dive\left(m \int_{\R^d}K(x-y)m(y) \, dy\right)=0 ,
\]
because the kernel of the convolution is usually the gradient of a potential, $K=-D k$. This equation describes the collective behavior of various animal 
 populations, its derivation and the choice of the kernel are based on phenomenological considerations, see, e.g., \cite{TBL, BT} and the references therein. In Subsection \ref{ex and app} we show that the examples of Aggregation Equation most studied  in the mathematical biology literature fit the assumptions of our convergence theorem, as well as some known models of crowd dyanmics. Therefore our result gives a further justification of such models within the framework of dynamic games with a large number of players.

Our second result concerns (first order) MFG of acceleration \cite{AMMT,CM}, formally written in the form
 \be\label{MFGaIntro}
 \left\{ \begin{array}{l}
\ds -\partial_t u_\lambda+\lambda u_\lambda -v\cdot D_xu_\l +\frac\l 2 |D_vu_\lambda|^2 = F(x,v, m_\lambda(t)) \qquad {\rm in }\; (0,+\infty)\times \R^{2d}
\\
\ds \partial_t m_\lambda + v\cdot D_xm_\l-\dive_v(m_\lambda 
\l D_v u_\lambda
)=0 \qquad {\rm in }\; (0,+\infty)\times \R^{2d}\\
\ds m_\lambda(0)=m_0,\qquad {\rm in }\;  \R^{2d} .
\end{array}\right.
\ee
 for which we prove the convergence to kinetic equations of the form
\be\label{CSIntro}
\left\{ \begin{array}{l}
\ds \partial_t m  + v\cdot D_xm -\dive_v(m D_vF(x,v, m))=0 \qquad {\rm in }\; (0,+\infty)\times \R^{2d},\\
\ds m(0)=m_0,\qquad {\rm in }\;  \R^{2d},
\end{array}\right.
\ee
as $\lambda \to+\infty$. To fix the ideas we work in the case where the coupling term $F$ corresponds to the Cucker-Smale model:  
$$
F(x,v,m(t)) = k * m(x,v,t) = \int_{\R^{2d}}  k(x-y, v-v_*)m(y,v_*,t) dy dv_* ,
$$
where 
$$
k(x,v)=\frac{|v|^2}{(\a +|x|^2)^\b}, \quad  \a>0 , \beta \geq 0
$$
and $v$ represents the velocity of the generic agent. Note that, in contrast with the first result, the coupling function $F$ is no longer globally Lipschitz continuous: as we explain below, this is a source of major issues and it obliges us to change completely the analysis.

Let us briefly explain the mechanism of proofs and the differences with the existing literature. In \cite{LLB}, the rough idea is that $u_\l
$ converges to $0$ and therefore $Du_\l$ converges to $0$ as well. In addition, the fact that the diffusion is nondegenerate ($\nu>0$) provides $C^{2+\alpha, 1+\alpha/2}$ bounds on $u_\l$ and $m_\l$, thanks to which one can pass  to the limit. 

For our first result, (Theorem \ref{theo_main1}, on the convergence of \eqref{mfgIntro} to \eqref{aggrIntro}),  we have to use a different argument. The key idea is that  $\l u_\l$ behaves like $F(x,m_\l)$, because $\l^{-1}F(x,m_\l)$ is almost a solution to \eqref{mfgIntro}. Therefore $\l D u_\l$  is close to $D F(x,m_\l)$, which explains the limit equation \eqref{aggrIntro}. Compared to \cite{LLB}, an additional difficulty comes from the lack of (uniform in $\l$) smoothness of the solutions, since we have no diffusion term in the limit equation. In particular, the product $m_\l \l Du_\l$ has to be handled with care, since $m_\l$ could degenerate as a measure while $Du_\l$ could become singular. We overcome this issue by proving a uniform semi-concavity of $\l u_\l$, which provides at the same time the $L^1_{loc}$ convergence of  $\l Du_\l$ and, thanks to an argument  going
 back to \cite{LL07mf} (see also \cite{CarH}) a (locally in time) uniform $L^\infty$ bound on the density of $m_\l$, and hence a weak-* convergence of $m_\l$. 

For the second result  (Theorem \ref{thm.mainBIS}, on the convergence of \eqref{MFGaIntro} to \eqref{CSIntro}), the fact that the coupling function $F$ growths in a quadratic way with respect to the (moment of) the measure prevents us 
from using  fixed point techniques (as in \cite{AMMT, CM}) to show the existence of a solution to the MFG system \eqref{MFGaIntro} and to obtain estimates on the solution (this would also be the case in the presence of a viscous term). This  obliges us to give up the PDE approach of the previous set-up and to use variational techniques, first suggested for MFG problems in \cite{LL07mf} 
 and largely developed since then: see, for instance, among many other contributions, \cite{BeCaSa, Ca15, CaGr15, CaMeSa, OrPoSa}. 
For that very same reason, we have to work with a finite horizon problem and with initial measure having a compact support. In contrast with the first result, we do not prove the convergence of {\it all} the solutions of the MFG system, but only for the ones which minimize the energy  written formally as
\be\label{lkajnzrd}
\int_0^Te^{-\l t} \int_{\R^{2d}}  (\frac{1}{2\l} |\alpha(x,v,t)|^2 + \int_{\R^{2d}} k(x-x_*,v-v_*)m(dx_*,dv_*,t)) m(dx,dv,t)dt
\ee
where $\partial_t m +v\cdot D_xm +{\rm div}_v( m\alpha)=0$.  We formulate this problem  in the space of  probability measures on curves $(\g, \dot\g)$, and the main technique of proof consists in obtaining estimates on the solution based on the dynamic programming principle in such  space.
This is reminiscent of ideas developed in \cite{RSSS} that we discuss below. Such an approach naturally involves weak solution of the MFG system and does not require the initial measure $m_0$ to be absolutely continuous. In this case the natural notion of solution for the limit equation \eqref{limitFPBIS} is the measure-valued solution developed in \cite{CCR} for \eqref{CSIntro}.

We could also have developed this second approach for the first type of results (i.e., the convergence of \eqref{mfgIntro} to \eqref{aggrIntro}), assuming that  the coupling function $F$ derives from an energy (the so-called potential mean field games): 
$$
F(x,m)= \frac{\delta F}{\delta m}(m,x)
$$
(see \cite{AGS} for the notion of derivative). Then it is known \cite{LL07mf} that minimizers $( m_\l,  \alpha_\l)$ of the problem 
\be\label{ailqkjsndc}
\inf\left\{ \int_0^{+\infty}e^{-\l t}\left( \int_{\R^d} \frac12 |\alpha|^2dx+ \l {\mathcal F}(m(t)) \right)dt, \qquad \partial_tm+{\rm div}(m\alpha)=0, \; m(0)=m_0\right\}, 
\ee
are solutions to the MFG system \eqref{mfgIntro} (with $\nu_\l =0$ and if ${\mathcal F}$ is smooth enough)  in the sense that there exists $ u_\l$ such that $( u_\l,  m_\l)$ solves \eqref{mfgIntro} and $ \alpha_\l= -\l D u_\l$. The convergence of minimizers, as $\l\to+\infty$, is studied in the nice paper \cite{RSSS}, where this convergence  is called ``Weighted Energy-Dissipation'': the authors prove that, under suitable assumptions on the function ${\mathcal F}$ (which allow for singular coupling functions), minimizers converge to a solution of the gradient flow associated to ${\mathcal F}$, i.e., at least at a formal level, to a solution of \eqref{aggrIntro}. Let us note that, in contrast with our setting, the solution of the limit equation can be singular and that \cite{RSSS} works in general metric spaces. It would be interesting to understand the precise interpretation of the results of \cite{RSSS} in terms of limits of MFGs, but this exceeds the scope of the present 
 paper.  Note however that our second result (i.e., the convergence  of \eqref{MFGaIntro} to \eqref{CSIntro}) does not 
 fit in the framework of \cite{CCR}. Indeed, the key idea of \cite{CCR} is that $m_\l$ is a gradient flow for the value function associated with Problem \eqref{ailqkjsndc}; as this value function converges to ${\mathcal F}$, $(m_\l)$ has to converge to the gradient flow for ${\mathcal F}$, which is precisely $m$; this gradient flow structure is completely lost in our framework of MFG of acceleration \eqref{MFGaIntro}: we have therefore to design a different approach. 

\subsection*{Notation} For any $p\geq 1$ we denote by ${\mathcal P}_p(\R^d)$ (or, in short ${\mathcal P}_p$) the set of Borel probability measures with finite $p-$order moment $M_p$: 
$$
M_p(m):= \int_{\R^d} |x|^p m(dx).
$$
The sets ${\mathcal P}_p(\R^d)$ are endowed with the corresponding  Wasserstein distance. 
Given a positive constant $\kappa$, we denote by ${\mathcal M}_{p,\kappa}(\R^d)$ the set of measures $m\in {\mathcal P}_p(\R^d)$ absolutely continuous with respect to the Lebesgue measure and with a density 
 bounded by $\kappa$.  We set ${\mathcal M}_{p}(\R^d) := \bigcup_{\kappa>0} {\mathcal M}_{p,\kappa}(\R^d)$. 
  In Section \ref{basic2BIS} we will also use, for $m\in{\mathcal P}_2(\R^d\times \R^d)$,
 $$
M_{2,v}(m):= \int_{\R^{2d}} |v|^2 m(dx, dv).
$$

\subsection*{Acknowledgment} The first-named author is member of the Gruppo Nazionale per l'Analisi Matematica, la Probabilit\`a e le loro Applicazioni (GNAMPA) of the Istituto Nazionale di Alta Matematica (INdAM); he was partially supported by the research project  ``Nonlinear Partial Differential Equations: Asymptotic Problems and Mean-Field Games" of the Fondazione CaRiPaRo. The second author was partially supported by the ANR (Agence Nationale de la Recherche) project  ANR-12-BS01-0008-01, by the CNRS through the PRC grant 1611 and by the Air Force Office for Scientific Research grant FA9550-18-1-0494. This work started during the visit of the second author at Padova University: the University is warmly thanked for this hospitality.

\section{Convergence for classical MFG systems} \label{sec.classicMFG}

In this section we consider MFG systems of the form  
\be\label{mfglamBIS}
\left\{ \begin{array}{l}
\ds -\partial_t u_\lambda-\nu_\l\Delta u_\l+\lambda u_\lambda  + \l^{-1} H(\l Du_\lambda,x) = F(x,m_\lambda(t)) \qquad {\rm in }\; \R^d\times (0,+\infty)
\\
\ds \partial_t m_\lambda-\nu_\l\Delta m_\l -\dive(m_\l
D_pH(\l Du_\l,x)
)=0 \qquad {\rm in }\; \R^d\times (0,+\infty)\\
\ds m_\lambda(0)=m_0,\qquad {\rm in }\;  \R^d, 
\end{array}\right.
\ee
where $\l>0$,  $\nu_\l>0$ and $\nu_\l\to 0$ as $\l \to +\infty$.
Our aim is to prove the convergence (up to a subsequence) of $m_\l$ as $\l\to +\infty$ to a solution $m$ of 
\be
\label{limitFP1}
\left\{ \begin{array}{l}
\ds \partial_t m -\dive(m D_pH(D_xF(x, m(t)),x))=0 \qquad {\rm in }\; \R^d\times (0,+\infty),\\
\ds m(0)=m_0,\qquad {\rm in }\;  \R^d.
\end{array}\right.
\ee
and to show also that
\[
\l u_\l(x,t) \to F(x,m(t))  \quad \text{loc. uniformly}, \qquad \l D u_\l(x,t) \to D_xF(x,m(t))  \quad \text{a.e.}
\]

\subsection{The convergence results}

 We work under the following conditions:  we assume that the initial measure  $m_0$ satisfies
\be 
\label{assm_0}
m_0 \in {\mathcal P}_2(\R^d)  \text{ is  absolutely continuous with a bounded density. }
\ee
The kind of costs we are interested in are non-local and regularizing. 
A possible assumptions on $F$ is that $F : \R^d\times {\mathcal M}_{1} (\R^d) \to \R$ is continuous in a suitable topology, has a linear growth and is Lipschitz continuous and  semi-concave in $x$. More precisely, we suppose the existence of a constant $C_o\geq1$ such that:
\begin{align} 
\label{assF0}
\text{For any $\kappa>0$, the restrictions of $F$ and $D_xF$ to   $\R^d\times {\mathcal M}_{1,\kappa} (\R^d)$}\notag \\
\text{ are continuous in both variables for the topology of $\R^d\times {\mathcal P}_{1} (\R^d)$}, 
\end{align}
\be
\label{assF}
|F(x,m)|\leq C_o(1+|x|), \qquad |F(x,m)-F(y,m)|\leq C_o|x-y|,
\ee
\be
\label{assFsc}
F(x+h,m)+F(x-h,m)-2F(x,m) \leq C_o |h|^2, \quad  \forall \, m \in {\mathcal M}_{1} (\R^d),
\ee
(recall that ${\mathcal M}_{p,\kappa}(\R^d)$ and ${\mathcal M}_{p}(\R^d)$ are defined in the introduction). 
We assume that $H:\R^d\times \R^d\to \R$ is convex with respect to the first variable and satisfies, 
 \be\label{HypH}
-C_o \leq H(p,x) \leq C_o(1+|p|^2) ,\qquad D^2_{pp}H(p,x)\geq C_o^{-1}I_d, 
\ee
\be\label{Hyp2}
|H(p,x)-H(p,y)|+|D_pH(p,x)-D_pH(p,y)| \leq C_o |x-y|(1+|p|),
\ee
\be\label{Hyp3}
|H(p,x)-H(q,x)|\leq C_o |p-q|(1+|p|+|q|),
\ee
\be\label{Hyp4}
H(p,x+h)+H(p,x-h)-2H(p,x) \geq -C_o |h|^2(1+|p|). 
\ee
Note that, if $H$ is smooth, then   conditions \eqref{Hyp2}, \eqref{Hyp3} and \eqref{Hyp4} can be equivalently rewritten as 
$$
|D_xH(p,x)|+|D_{px}H(p,x)| \leq C_o (1+|p|),
$$
$$
|D_pH(p,x)|\leq C_o (1+|p|),\qquad 
D^2_{xx}H(p,x) \geq -C_o (1+|p|). 
$$

\begin{Theorem}
\label{theo_main1}
Assume  \eqref{assm_0}, \eqref{assF0}, \eqref{assF},  \eqref{assFsc}, \eqref{HypH},  \eqref{Hyp2},  \eqref{Hyp3} and  \eqref{Hyp4}. Let $(u_\l, m_\l)$ be a solution to \eqref{mfglamBIS}. 
Then $(m_\l)$ is relatively compact in $C^0([0,T], {\mathcal P}_1(\R^d))$ and is bounded in $L^ \infty(\R^{d}\times [0,T])$ for any $T>0$. Moreover, the limit $m$,  as $\l_n\to +\infty$, of any converging subsequence $(m_{\l_n})$ in $C^0([0,T], {\mathcal P}_1(\R^d))$ is a solution of \eqref{limitFP1} in the sense of distributions and
\[
\l_n u_{\l_n}(x,t) \to F(x,m(t))  \quad \text{locally uniformly and} \quad  \l_n D u_{\l_n}(x,t) \to 
D F(x,m(t))  \quad \text{a.e.}
\]
\end{Theorem}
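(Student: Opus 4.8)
The plan is a compactness-and-stability scheme resting on the heuristic that $w_\lambda:=\lambda u_\lambda$ must track $F(\cdot,m_\lambda)$. Indeed the Hamilton--Jacobi equation in \eqref{mfglamBIS} rewrites as $-\partial_t w_\lambda-\nu_\lambda\Delta w_\lambda+\lambda\,(w_\lambda-F(x,m_\lambda(t)))+H(Dw_\lambda,x)=0$, so once the remaining terms are shown to be $O(1)$, the $O(\lambda)$ zeroth order term forces $w_\lambda-F(\cdot,m_\lambda)\to0$, hence $Dw_\lambda=\lambda Du_\lambda$ should converge to $D_xF(\cdot,m)$ and the optimal drift $-D_pH(\lambda Du_\lambda,x)$ to $-D_pH(D_xF(\cdot,m),x)$, which explains \eqref{limitFP1}.

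First I would prove a priori bounds on $w_\lambda$ uniform in $\lambda$ and in $\nu_\lambda\to0$. Linear growth of $F$ in \eqref{assF} together with $H\ge -C_o$ gives, by comparison with suitable barriers, $|w_\lambda(x,t)|\le C(1+|x|)$. Differentiating the HJB equation once and using \eqref{Hyp2} and the Lipschitz bound in \eqref{assF}, a Bernstein-type maximum principle argument (the discount term $\lambda w_\lambda$ doing the work in the infinite-horizon setting, with the usual localization) yields a uniform Lipschitz bound $\|Dw_\lambda\|_\infty\le C$ for $\lambda$ large. The key estimate is uniform semi-concavity: differentiating twice along a unit vector $\xi$ and setting $z:=(w_\lambda)_{\xi\xi}$, the forcing term of the equation for $z$ is $-\langle D^2_{pp}H\,D(w_\lambda)_\xi,D(w_\lambda)_\xi\rangle-2D^2_{px}H[D(w_\lambda)_\xi,\xi]-D^2_{xx}H[\xi,\xi]+\lambda D^2_{xx}F[\xi,\xi]$; here the strictly negative first term, coming from the uniform convexity $D^2_{pp}H\ge C_o^{-1}I$ of \eqref{HypH}, absorbs the cross term by Young's inequality, \eqref{Hyp4} bounds $-D^2_{xx}H$ by $C(1+|Dw_\lambda|)$, and \eqref{assFsc} bounds $\lambda D^2_{xx}F[\xi,\xi]$ by $\lambda C_o$. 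Evaluating at a maximum of $z$, the $O(\lambda)$ contributions on the two sides cancel and one obtains $D^2w_\lambda\le (C_o+C/\lambda)\,I$ uniformly. Since $u_\lambda$ is smooth (as $\nu_\lambda>0$) these computations are licit, but they must be performed so that all constants are independent of $\nu_\lambda$; alternatively one runs them via the doubling-of-variables technique for viscosity solutions.

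Next I would derive the uniform $L^\infty$ bound on $m_\lambda$ and the compactness of $(m_\lambda)$. The drift $b_\lambda:=D_pH(Dw_\lambda,x)$ of the continuity equation satisfies $\dive b_\lambda=\tr(D^2_{pp}H\,D^2w_\lambda)+\tr D_{xp}H\le C$ uniformly, because $D^2_{pp}H\ge0$, $D^2w_\lambda\le CI$ by the previous step, and $\tr D^2_{pp}H$, $|D_{xp}H|$ are controlled by \eqref{Hyp3}, \eqref{Hyp2} once $Dw_\lambda$ is bounded; the argument going back to \cite{LL07mf} (see also \cite{CarH}; the viscous term only helping, being an $L^\infty$-contraction) then gives $\|m_\lambda(t)\|_{L^\infty}\le \|m_0\|_{L^\infty}e^{Ct}$. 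A Gr\"onwall estimate on the second moment $M_2(m_\lambda(t))$ (both $b_\lambda$ and $\nu_\lambda$ being bounded) gives tightness, and boundedness of $b_\lambda$ and $\nu_\lambda$ gives a uniform modulus of continuity for $t\mapsto m_\lambda(t)$ in $\dk$; Arzel\`a--Ascoli yields relative compactness of $(m_\lambda)$ in $C^0([0,T],{\mathcal P}_1(\R^d))$. Fix a subsequence with $m_{\lambda_n}\to m$ there; by the $L^\infty$ bound also $m_{\lambda_n}(t)\rightharpoonup m(t)$ weakly in $L^p_{loc}$ for every $p<\infty$.

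Finally I would identify the limit. To show $w_{\lambda_n}\to F(\cdot,m)$ locally uniformly, first establish $\|w_\lambda-F(\cdot,m_\lambda(\cdot))\|_{L^\infty(K)}\to0$: mollifying $m_\lambda$ in time so that $t\mapsto F(x,m_\lambda^\e(t))$ is Lipschitz with constant $O(1/\e)$, a comparison in the HJB equation gives $\|w_\lambda-F(\cdot,m_\lambda^\e(\cdot))\|_{L^\infty(K)}\le C(1+1/\e)/\lambda$, while $\|F(\cdot,m_\lambda(t))-F(\cdot,m_\lambda^\e(t))\|_{L^\infty(K)}\le\omega(C\e)$ by the uniform continuity of $F$ on $K\times{\mathcal M}_{1,\kappa}(\R^d)$ from \eqref{assF0} ($\kappa$ the uniform density bound), so choosing $\e=\e_\lambda\to0$ slowly concludes; then $F(\cdot,m_{\lambda_n}(\cdot))\to F(\cdot,m(\cdot))$ locally uniformly by \eqref{assF0} again. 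Uniform semi-concavity and Lipschitz continuity in $x$ of the $w_\lambda(\cdot,t)$ upgrade locally uniform convergence to a.e. convergence of gradients, so $\lambda_n Du_{\lambda_n}=Dw_{\lambda_n}\to D_xF(\cdot,m)$ a.e. One then passes to the limit in the distributional formulation of the Fokker--Planck equation: the $\nu_{\lambda_n}\Delta$ terms vanish, $D_pH(\lambda_n Du_{\lambda_n},x)\to D_pH(D_xF(x,m),x)$ a.e. and is uniformly bounded on the support of each test function, and its product with $m_{\lambda_n}\rightharpoonup m$ (uniformly bounded in $L^\infty_{loc}$) converges by dominated and weak $L^2_{loc}$ convergence; the limit is precisely \eqref{limitFP1}. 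The main obstacle is the uniform semi-concavity bound: it is what simultaneously tames the potentially singular gradient $\lambda Du_\lambda$ and, via the bound on $\dive b_\lambda$, prevents $m_\lambda$ from concentrating, so that the critical product $m_\lambda\,\lambda Du_\lambda$ survives the passage to the limit; the delicate point there is that the dangerous lower-order term involving uncontrolled third derivatives of $u_\lambda$ is absorbed only through the uniform convexity $D^2_{pp}H\ge C_o^{-1}I$, and that all estimates must be independent of the vanishing viscosity $\nu_\lambda$.
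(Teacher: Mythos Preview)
Your plan matches the paper's proof almost exactly: the same three a priori estimates on $w_\lambda=\lambda u_\lambda$ (linear growth by barriers, uniform Lipschitz by differentiating once and using the discount, uniform semi-concavity by differentiating twice), then the $L^\infty$ bound on $m_\lambda$ via the one-sided bound on ${\rm div}\,b_\lambda$, moment bounds and equicontinuity in $\dk$ for compactness, and finally semi-concavity to upgrade uniform convergence of $w_\lambda$ to a.e.\ convergence of $Dw_\lambda$ (the paper cites \cite[Thm~3.3.3]{CS} here) and pass to the limit in the continuity equation.

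The one place where you diverge is the identification step $w_\lambda\to F(\cdot,m)$. The paper does this in one line via Barles--Perthame half-relaxed limits: since $w_\lambda$ solves $-\lambda^{-1}\partial_t w_\lambda-\lambda^{-1}\nu_\lambda\Delta w_\lambda+w_\lambda+\lambda^{-1}H(Dw_\lambda,x)=F(x,m_\lambda(t))$, the upper and lower half-relaxed limits are sub- and supersolutions of the trivial equation $w=F(x,m(t))$, hence equal to it. Your mollification-and-comparison route is more laborious and has a small gap as stated: plugging $\psi=F(\cdot,m_\lambda^\e)$ into the HJB equation as a barrier requires $\Delta_x F$, but $F$ is only assumed Lipschitz and semi-concave in $x$, not $C^2$; you would need to mollify in $x$ as well, and then track an extra error term. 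The half-relaxed limits argument sidesteps this entirely. (Conversely, your use of the uniform convexity $D^2_{pp}H\ge C_o^{-1}I$ and Young's inequality to absorb the cross term $2D^2_{px}H\cdot D(w_\lambda)_\xi$ in the semi-concavity step is the right way to handle it, since $D(w_\lambda)_\xi$ involves uncontrolled second derivatives.)
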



The existence of a solution to \eqref{mfglamBIS} under the assumptions above can be established by standard arguments, using  the estimates in Section \ref{sec.prooftheo1} below,  Remark \ref{existence}. A typical example of a Hamiltonian satisfying our assumptions in 
$$
H(x,p)= -v(x)\cdot p+\frac12|p|^2, 
$$
where the vector field $v:\R^d\to \R^d$ is bounded and with bounded first and second order derivatives. 

\begin{Remark}
\label{deterministic}
\upshape
In the case of deterministic  MFGs, $\nu_\l=0$ for all $\l$, the solution $(u_\l, m_\l)$ is not smooth and the proof of convergence by PDE methods is harder. We can prove a result analogous to Theorem \ref{theo_main1} under the additional assumption that $\|F(\cdot,m)\|_{C^2}\leq C$ for all $m \in {\mathcal M}_{1} (\R^d)$, and the support of  $m_0$ is compact, using the methods of \cite{Car}. In this  case we can also prove that $m_\l(t)$ has a support uniformly bounded for $t\in[0,T]$: a result of this kind is proved in Section \ref{basic2BIS} for the MFGs of acceleration. Then we expect uniqueness in the limit equation: see the next remark.
\end{Remark}
\begin{Remark}
\label{cor:uniq}
\upshape
In addition to the assumptions of Theorem \ref{theo_main1} suppose that the vector field $G$ appearing in the  limit equation \eqref{limitFP1}, $G(x,m):=-D_pH(D_xF(x,m),x)$, is such that, for all $m \in {\mathcal M}_{1} (\R^d)$, $x\mapsto G(x,m)$ is $C^1$ and 
\[
|G(x,m) - G(y,m)|\leq C_1|x-y|\,,\quad \|G(\cdot,m) - G(\cdot,\bar m)\|_\infty\leq C_1 \dk(m,\bar m) \,,
\]
where $\dk$ is the 1-Wasserstein distance. Then it is proved in \cite{PR1} that there is a unique solution $m$ of \eqref{limitFP1} with compact support in $x$. Therefore, under these additional conditions, the whole family $m_\l$ converges to $m$ as $\l\to +\infty$, as in the problem of Section \ref{basic2BIS}. For instance, the support of $m$ is compact in $x$ in deterministic MFGs, if $m_0$ has compact support, see the preceding remark. 
\end{Remark}
\begin{Remark}
\label{viscous}
\upshape
The case of $\nu_\l\to \nu_\infty >0$ can be treated as in the proof of Theorem \ref{theo_main1} and leads in the limit to the viscous Fokker-Planck equation
\[
 \partial_t m - \nu_\infty\Delta m - \dive(m D_pH(D_xF(x, m(t)),x))=0 \qquad {\rm in }\; \R^d\times (0,+\infty).
\]
\end{Remark}

\subsection{Proof of Theorem \ref{theo_main1}}\label{sec.prooftheo1}

In this part, assumptions  \eqref{assm_0}, \eqref{assF0}, \eqref{assF}, \eqref{assFsc}, \eqref{HypH},  \eqref{Hyp2},  \eqref{Hyp3} and  \eqref{Hyp4} are in force. 
We start with some estimates for a solution to  \eqref{mfglamBIS}.

\begin{Proposition}\label{prop.1} Let $(u_\l,m_\l)$ be a solution of \eqref{mfglamBIS}. Then $\ds | u_\l(x,t) | \leq \l^{-1}\tilde C(1+|x|)$ for some constant $\tilde C$ independent of $\l \geq 1+4d+16C_0^2$ such that $\nu_\l\leq 1$.
\end{Proposition}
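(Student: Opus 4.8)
The plan is to bracket $u_\l$ between the smooth stationary barriers $\pm\l^{-1}\phi$, with $\phi(x):=A\sqrt{1+|x|^{2}}$ and $A$ a suitable absolute multiple of $C_o$, and then to close the estimate by a maximum-principle argument adapted to the infinite time horizon. The function $\phi$ satisfies $\phi>0$, $|D\phi|\le A$, $0\le \Delta\phi\le Ad$ and $\tfrac{A}{\sqrt2}(1+|x|)\le\phi(x)\le A(1+|x|)$. Plugging $\bar u:=\l^{-1}\phi$ into the Hamilton--Jacobi operator in \eqref{mfglamBIS} and using $\nu_\l\le 1$, the lower bound $H\ge -C_o$ from \eqref{HypH} and the linear growth $|F|\le C_o(1+|x|)$ from \eqref{assF}, the error produced by the viscosity and Hamiltonian terms is $\ge -\l^{-1}(Ad+C_o)$, which for $A$ a large enough multiple of $C_o$ is absorbed into the gap between $\l\bar u=\phi$ and $F$ precisely when $\l\ge 1+4d+16C_0^{2}$; thus $\bar u$ is a supersolution of the first equation of \eqref{mfglamBIS}. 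Symmetrically, using this time the upper bound $H(p,x)\le C_o(1+|p|^{2})$ to estimate $\l^{-1}H(-D\phi,x)\le\l^{-1}C_o(1+A^{2})$, one gets that $\underline u:=-\l^{-1}\phi$ is a subsolution. These are routine computations, and the stated threshold on $\l$ leaves room to spare.

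Next I would run the comparison. Set $w:=u_\l-\bar u$. Since by hypothesis $u_\l$ is bounded on $\R^{d}\times(0,+\infty)$ while $\phi(x)\to+\infty$, we have $w(x,t)\to-\infty$ as $|x|\to\infty$ uniformly in $t$, so $M(t):=\sup_{x}w(x,t)$ is finite, uniformly bounded in $t$, and attained at some $x_t$. Subtracting the equation for $u_\l$ from the supersolution inequality for $\bar u$ gives
\[
-\partial_t w-\nu_\l\Delta w+\l w+\l^{-1}\bigl(H(\l Du_\l,x)-H(\l D\bar u,x)\bigr)\le 0 ,
\]
and evaluating at $x_t$, where $Dw=0$ (so $Du_\l=D\bar u$ and the Hamiltonian terms cancel) and $\Delta w\le 0$, leaves $-\partial_t w(x_t,t)+\l w(x_t,t)\le 0$. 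By the envelope theorem $M$ is locally Lipschitz with $M'(t)=\partial_t w(x_t,t)$ a.e., hence $M'\ge\l M$, i.e. $t\mapsto e^{-\l t}M(t)$ is nondecreasing; being bounded it has a finite limit as $t\to+\infty$, which forces $e^{-\l t}M(t)\to 0$, so $M\le 0$ and $u_\l\le\l^{-1}\phi$. Running the same argument with $\underline u-u_\l$ yields $u_\l\ge-\l^{-1}\phi$, whence $|u_\l(x,t)|\le\l^{-1}\phi(x)\le\l^{-1}\tilde C(1+|x|)$ with $\tilde C:=A$, independent of $\l$.

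\textbf{Main obstacle.} The one delicate point, and the reason a lower bound on $\l$ is needed at all, is that \eqref{mfglamBIS} prescribes no terminal or initial datum for $u_\l$, so the comparison cannot use boundary values and must be driven by the discount term $\l u_\l>0$; the monotonicity of $e^{-\l t}M(t)$ together with its vanishing at $+\infty$ is exactly the device that replaces a boundary condition. Two features keep this self-contained: the Hamiltonian contribution disappears automatically at the maximizing point because $Dw=0$ there (so no local gradient bound on $u_\l$ is needed), and boundedness of $u_\l$ enters only qualitatively (so $\tilde C$ carries no dependence on $\|u_\l\|_\infty$). If $(u_\l,m_\l)$ were only a viscosity solution the same scheme applies through the parabolic theory of viscosity solutions; alternatively, the upper bound can be read off from the stochastic-control representation of $u_\l$ by testing with the null control.
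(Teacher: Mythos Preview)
Your proof is correct and follows the same route as the paper: both use the stationary barriers $\pm\l^{-1}A\sqrt{1+|x|^{2}}$ (the paper takes $A=4C_o$) and verify the sub/supersolution inequalities via the growth bounds \eqref{assF}, \eqref{HypH}, yielding the same threshold $\l\ge 1+4d+16C_0^{2}$. The only difference is presentational: the paper stops after checking the barrier inequalities, leaving the comparison implicit, whereas you spell out the infinite-horizon comparison by tracking $t\mapsto e^{-\l t}\sup_x(u_\l-\bar u)$ and using its monotonicity plus decay to zero; this is a welcome clarification of the step the paper suppresses.
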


\begin{proof} We 
 note that $w^\pm(x,t):= \pm \l^{-1} \tilde C(1+|x|^2)^{1/2}$ is a supersolution (for +) and a subsolution (for -) of \eqref{mfglamBIS} for a suitable $\tilde C$.  Let us determine $C$ such that $w= - \l^{-1} C(1+|x|^2)^{1/2}$ is a subsolution, the other case being easier.  By the growth assumptions \eqref{assF} and \eqref{HypH}, and for $\nu_\l\leq 1$,
\begin{align*}
 & -\partial_t w-\nu_\l\Delta w+\lambda w  + \l^{-1} H(\l Dw,x) - F(x,m_\lambda(t)) { \leq }\\
 & \quad { \frac{C d\nu_\l}{\l(1+|x|^2)^{1/2}}   }- C(1+|x|^2)^{1/2} + \frac 1\l H\left(\frac{ - Cx}{(1+|x|^2)^{1/2}}, x\right) - F(x,m_\lambda(t))\leq\\
 & \qquad { \frac{Cd}\l  - \frac{C}{2}(1+|x|) }+ \frac {C_0}\l \left(1+\frac{C^2|x|^2}{(1+|x|^2)}\right) + C_0(1+|x|) .
\end{align*}
If we choose ${C=4C_0}$ the right hand side can be bounded above by
\[
{\frac{4 C_0d}{\l} } - C_0 +C_0 \frac{1+16 C_0^2}\l \leq 0
\]
if $ \l \geq 1+4d+16C_0^2$.
\end{proof}

\begin{Proposition}\label{prop.2} Let $(u_\l,m_\l)$ be a solution of \eqref{mfglamBIS}. Then $\ds \| Du_\l \|_\infty \leq 4\l^{-1}C_o$ for $\l\geq 2C_o$.
\end{Proposition}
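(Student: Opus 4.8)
The plan is to obtain a gradient bound for $u_\l$ by the Bernstein method, exploiting the large zeroth-order term $\l u_\l$ in the Hamilton--Jacobi equation together with the Lipschitz bound on $F$ from \eqref{assF}. First I would differentiate the equation for $u_\l$ in $x$ and test against a suitable function of $w:=|Du_\l|^2$. Concretely, set $p=\l Du_\l$ so that the HJ equation reads $-\partial_t u_\l -\nu_\l\Delta u_\l+\lambda u_\lambda+\l^{-1}H(p,x)=F(x,m_\lambda(t))$; differentiating in $x_k$, multiplying by $\partial_{x_k}u_\l$ and summing over $k$ yields a parabolic inequality for $w$ of the form
\[
-\partial_t w-\nu_\l\Delta w+2\nu_\l|D^2u_\l|^2+2\l w+D_pH(p,x)\cdot Dw+\tfrac{2}{\l}D_xH(p,x)\cdot Du_\l=2D_xF(x,m_\l)\cdot Du_\l.
\]
Using \eqref{Hyp2} to bound $|D_xH(p,x)|\le C_o(1+|p|)=C_o(1+\l|Du_\l|)$ and \eqref{assF} to bound $|D_xF|\le C_o$, the right-hand side and the last term on the left are controlled by $C_o(1+\sqrt w)\cdot\sqrt w$ up to the factor $\tfrac{2}{\l}$, while the good term $2\l w$ dominates; dropping the nonnegative terms $\nu_\l|D^2u_\l|^2$ we arrive, at an interior maximum point of $w$ (in space, for frozen time, or rather at a space-time maximum using that $w$ is bounded since $Du_\l$ is a priori bounded — which follows from existence theory — and then arguing the bound is uniform), at $2\l w\le 2C_o\sqrt w+\tfrac{2C_o}{\l}(1+\l\sqrt w)\sqrt w$, hence $\sqrt w\le C_o/\l + C_o\sqrt w/\l^2 + \ldots$, giving $\|Du_\l\|_\infty\le 4C_o/\l$ once $\l\ge 2C_o$.

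A cleaner route, which avoids worrying about whether the maximum of $w$ is attained, is to build an explicit supersolution/subsolution pair exactly as in the proof of Proposition \ref{prop.1}. Here one would instead compare $u_\l(x,t)$ with translates: using the estimates on the $x$-dependence of $H$ in \eqref{Hyp2} and of $F$ in \eqref{assF}, one shows that $x\mapsto u_\l(x+h,t)+\l^{-1}L|h|$ is a supersolution of the equation satisfied by $u_\l(\cdot,t)$ for $L=4C_o$ (the $\l^{-1}$ scaling is forced by the $\l^{-1}H$ and $\l u_\l$ terms, which together behave like comparing $\l u_\l$ with $F$, Lipschitz with constant $C_o$, plus the contribution of the $x$-Lipschitz bound on $H$ evaluated at $p=\l Du_\l$, which is itself of order $\|Du_\l\|_\infty$ and so gets absorbed for $\l$ large). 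Then the comparison principle for the uniformly parabolic equation \eqref{mfglamBIS} gives $u_\l(x+h,t)-u_\l(x,t)\le \l^{-1}L|h|$ for all $h$, and letting $|h|\to0$ yields $\|Du_\l\|_\infty\le L/\l=4C_o/\l$.

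The main obstacle I anticipate is handling the $x$-dependence of $H$ correctly: the term $\l^{-1}|D_xH(\l Du_\l,x)|$ is a priori of size $\l^{-1}C_o(1+\l\|Du_\l\|_\infty)=\l^{-1}C_o+C_o\|Du_\l\|_\infty$, so it is \emph{not} automatically small and must be absorbed into the coercive term $\l\|Du_\l\|_\infty$ — this is precisely why the threshold $\l\ge 2C_o$ appears. One must therefore set up the argument so that the unknown bound $\|Du_\l\|_\infty$ appears on both sides and can be moved to the left-hand side; this is a standard bootstrap but needs the coefficient of $\|Du_\l\|_\infty$ coming from $H$ to be strictly less than the coefficient $\l$ coming from $\l u_\l$, which holds for $\l\ge 2C_o$. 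The convexity of $H$ in \eqref{HypH} is not needed here; only the Lipschitz-in-$x$ estimate \eqref{Hyp2} and the $x$-Lipschitz bound \eqref{assF} on $F$ enter.
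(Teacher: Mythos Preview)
Your proposal is correct and follows essentially the same strategy as the paper: differentiate the HJ equation in space, apply the maximum principle, and use the large term $\l u_\l$ to absorb the $\|Du_\l\|_\infty$ contribution coming from $\l^{-1}D_xH(\l Du_\l,x)$ via \eqref{Hyp2}, which is exactly why the threshold $\l\ge 2C_o$ appears. The only cosmetic difference is that the paper works with the directional derivative $w=Du_\l\cdot\xi$ (a linear equation, then takes the supremum over $|\xi|\le1$), whereas your first route uses $w=|Du_\l|^2$; both lead to the same absorption inequality and the same bound.
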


\begin{proof} We use an a priori estimate, proving that, if $u_\l$ is Lipschitz continuous and if $H$  and $(x,t)\to F(x,m_\l(t))$ are smooth, then $u_\l$ satisfies the required estimate. One can then complete the proof easily, approximating the HJ equation by HJ equations with smooth and globally Lipschitz continuous Hamiltonians and right-hand sides and passing to the limit. We omit this last part which is standard and proceed with the argument. 

Given a direction $\xi\in \R^d$ with $|\xi|\leq 1$, let $w:= Du_\l\cdot \xi$. Then $w$ satisfies 
$$
-\partial_t w -\nu_\l \Delta w  +\l w + D_pH(\l Du_\l, x) \cdot Dw +\l^{-1} D_\xi H(\l Du_\l,x)= D_\xi F(x,m(t)).
$$
In view of assumptions \eqref{assF} and  \eqref{Hyp2} we have therefore
$$
-\partial_t  w -\nu_\l \Delta  w +\l w+ D_pH(\l Du_\l, x) \cdot Dw  -C_o \l^{-1} (1+\l \|Du_\l\|_\infty) \leq C_o.
$$
So by the maximum principle we have 
$$
Du_\l \cdot\xi = w\leq \l^{-1} C_o ( 1+\l^{-1}+  \|Du_\l\|_\infty). 
$$
Taking the supremum over $|\xi|\leq 1$,  gives the result for $\l$ larger than $2C_o$. 
\end{proof}

\begin{Proposition}\label{prop.3} Let $(u_\l,m_\l)$ be a solution of \eqref{mfglamBIS}. Then $D^2 u_\l \leq \l^{-1}\tilde C$, where $\tilde C$ does not depend on $\l\geq 2C_o$.
\end{Proposition}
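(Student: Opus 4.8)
The goal is a one-sided (semiconcavity) bound $D^2 u_\l \le \l^{-1}\tilde C \, I_d$ with $\tilde C$ independent of $\l$. The plan is to run the standard linearized maximum-principle argument for second differences, exploiting that the "bad" terms coming from the $x$-dependence of $H$ and from $F$ carry a factor $\l^{-1}$ thanks to the scaling $H(\l Du_\l,x)$, $\l^{-1}H$, together with the $O(\l^{-1})$ gradient bound from Proposition \ref{prop.2}. As in Proposition \ref{prop.2}, I would first argue by a priori estimate on smooth solutions (with $H$ and $(x,t)\mapsto F(x,m_\l(t))$ smooth) and then recover the general case by the usual approximation/stability argument, which I would only mention.

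For the a priori estimate, fix a unit direction $\xi\in\R^d$ and set $w:=D^2_{\xi\xi}u_\l = \langle D^2 u_\l\,\xi,\xi\rangle$. Differentiating the HJ equation in \eqref{mfglamBIS} twice in the direction $\xi$ gives, schematically,
\[
-\partial_t w -\nu_\l\Delta w +\l w + D_pH(\l Du_\l,x)\cdot Dw + \l\, \langle D^2_{pp}H(\l Du_\l,x)\,D_\xi Du_\l, D_\xi Du_\l\rangle = R,
\]
where $R$ collects the first-order-in-$H$ cross terms $D^2_{px}H$, the pure $x$-term $\l^{-1}D^2_{xx}H$, and $D^2_{\xi\xi}F(x,m_\l(t))$. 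The term $\l\,\langle D^2_{pp}H\cdot D_\xi Du_\l, D_\xi Du_\l\rangle$ is nonnegative by the convexity hypothesis in \eqref{HypH} and can be dropped for an upper bound on $w$. The remaining task is to bound $R$ from above by something of the form $C + C\l\, \|Du_\l\|_\infty^2 \cdot(\text{something small})$; using \eqref{Hyp2}, \eqref{Hyp4} and \eqref{assFsc} one gets $R \le C_o + C_o(1+\|\l Du_\l\|_\infty)\,\|D^2 u_\l\|\cdot(\text{factor})+\l^{-1}C_o(1+\|\l Du_\l\|_\infty)$, and then inserting $\|Du_\l\|_\infty\le 4\l^{-1}C_o$ from Proposition \ref{prop.2} makes all the $x$-dependent contributions $O(1)$ (not $O(\l)$). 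Applying the maximum principle to the equation for $w$ — note the good zeroth-order coefficient $+\l$ — yields $w \le \l^{-1}(C + \text{lower order})$ uniformly; taking the supremum over $|\xi|\le1$ gives $D^2 u_\l \le \l^{-1}\tilde C$.

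The delicate point is the treatment of the cross term $D^2_{px}H(\l Du_\l,x)\cdot D_\xi Du_\l$, which formally contains a factor $\l$ (from differentiating $\l Du_\l$) multiplying a second derivative of $u_\l$; one must check this is absorbed, either by Cauchy--Schwarz against the good convex term $\l D^2_{pp}H\,|D_\xi Du_\l|^2 \ge C_o^{-1}\l|D_\xi Du_\l|^2$ — paying a harmless $C_o^2\l$ times a lower-order quantity — or by observing it contributes only to $|D_\xi Du_\l|$ and using \eqref{Hyp2} together with the smallness of $\|\l Du_\l\|_\infty = O(1)$. I expect this balancing (making sure the sign-good term $\l D^2_{pp}H|D_\xi Du_\l|^2$ genuinely dominates the cross term rather than just the bad pure-$x$ term) to be the main obstacle; once it is set up cleanly, the maximum principle does the rest. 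Finally, I would remark that this semiconcavity estimate is exactly what later allows passing to the limit in the product $m_\l\,\l Du_\l$, as announced in the introduction.
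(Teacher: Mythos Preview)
Your proposal is correct and follows essentially the same route as the paper: differentiate the HJ equation twice in a fixed direction $\xi$, discard the sign-good term $\l\,D^2_{pp}H\,Dw\cdot Dw\ge 0$, bound the remaining source terms using \eqref{assFsc}, \eqref{Hyp2}, \eqref{Hyp4} together with the gradient bound of Proposition~\ref{prop.2}, and conclude by the maximum principle and the good $+\l$ zeroth-order coefficient. Your flagging of the cross term $2D^2_{p\xi}H\cdot D_\xi Du_\l$ as the delicate point is apt, and your first option --- absorbing it by Cauchy--Schwarz into the strictly convex term $\l D^2_{pp}H\,Dw\cdot Dw\ge C_o^{-1}\l|Dw|^2$, paying only $C_o\l^{-1}|D^2_{p\xi}H|^2=O(\l^{-1})$ --- is exactly the right way to do it, since $D_\xi Du_\l=D^2u_\l\xi$ is itself a second derivative and is not bounded a priori (your ``second option'' would not close by itself; the paper's own proof is terse at precisely this step).
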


\begin{proof} Here again we  focus on a priori estimates for smooth data. Given a direction $\xi\in \R^d$ with $|\xi|\leq 1$, let $w:= Du_\l \cdot \xi$
and $z:= D^2u_\l\xi \cdot \xi$. Then 
\begin{align*}
&-\partial_t z -\nu_\l \Delta  z +\l z+ D_pH(\l Du_\l, x) \cdot Dz +2 D_{p,\xi} H(\l Du_\l, x) \cdot Dw \\
&\qquad +\l D_{pp} H(\l Du_\l,x)Dw\cdot Dw
+ \l^{-1}D_{\xi\xi} H(\l Du_\l,x) = D^2_{\xi\xi}F(x,m_\l(t)).
\end{align*}
Since  $F$ is semiconcave in $x$ \eqref{assFsc}, the right hand side $D^2_{\xi\xi}F$ is bounded above by $C_0$.
The uniform bound on $\l Du_\l$ proved in Proposition \ref{prop.2} and the assumption \eqref{Hyp2} 
imply
\[
2 D_{p,\xi} H(\l Du_\l, x) \cdot Dw \geq -2C_0(1+\l |Du_\l|) |Du_\l| \geq -C_1 .
\]
The same bound and the assumption \eqref{Hyp4} 
imply
\[
\l^{-1}D_{\xi\xi} H(\l Du_\l,x)\geq -C_0(1+\l |Du_\l|) \geq -C_2.
\]
Since $H$ is
convex in $p$,  
$D^2_{pp}H\geq 0$ and we infer that $z$ satisfies 
 \begin{align*}
&-\partial_t z -\nu_\l \Delta  z +\l z+ D_pH(\l Du_\l, x) \cdot Dz  \leq \tilde C, 
\end{align*}
where the constant $\tilde C$ does not depend on $\l$ and $|\xi|\leq 1$. We 
conclude again by the maximum principle. 
\end{proof}

\begin{Proposition}\label{prop.mlcompact} Let $(u_\l,m_\l)$ be a solution of \eqref{mfglamBIS}. For any $T>0$, the family $(m_\l)$ satisfies
$$
\sup_{\lambda\geq 2C_o} \sup_{t\in [0,T]} \int_{\R^d} |x|^2m_\l(x,t)dx <+\infty,  
$$
is relatively compact in $C^0([0,T], {\mathcal P}_1)$ and bounded in $L^\infty(\R^d\times [0,T])$. 
\end{Proposition}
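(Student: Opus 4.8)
The plan is to read all three assertions off a single observation: thanks to Proposition \ref{prop.2}, the drift
$b_\lambda(x,t):=D_pH(\lambda Du_\lambda(x,t),x)$ of the Fokker--Planck equation in \eqref{mfglamBIS} is bounded uniformly in $\lambda\ge 2C_o$. Indeed $\|\lambda Du_\lambda\|_\infty\le 4C_o$, so \eqref{Hyp3} (equivalently $|D_pH(p,x)|\le C_o(1+|p|)$ in the smooth case) gives $\|b_\lambda\|_\infty\le C_o(1+4C_o)=:\bar b$. Combined with $\nu_\lambda\le 1$, this already yields the first two conclusions; the $L^\infty$ bound needs, in addition, the semi-concavity estimate of Proposition \ref{prop.3}.

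For the second-moment bound I would differentiate $t\mapsto \int_{\R^d}|x|^2 m_\lambda(x,t)\,dx$ along the equation, obtaining $\frac{d}{dt}\int|x|^2m_\lambda = 2d\nu_\lambda - 2\int x\cdot b_\lambda\, m_\lambda \le 2d + \bar b\big(1+\int|x|^2m_\lambda\big)$, and conclude by Gronwall's inequality, uniformly on $[0,T]$ (a fully rigorous argument truncates $|x|^2$ and lets the cut-off tend to infinity, which is justified precisely by this same bound). For equicontinuity in time in $\mathcal P_1$ I would use Kantorovich--Rubinstein duality: for $\phi$ $1$-Lipschitz and $s<t$, $\int\phi\,d(m_\lambda(t)-m_\lambda(s)) = \int_s^t\int_{\R^d}(\nu_\lambda\Delta\phi + b_\lambda\cdot D\phi)\,m_\lambda$, so after regularizing $\phi$ one gets $\dk(m_\lambda(t),m_\lambda(s))\le C(|t-s| + |t-s|^{1/2})$ with $C$ depending only on $\bar b$ (the $\sqrt{\nu_\lambda}$ coming from the diffusion is $\le 1$); alternatively one invokes the representation $m_\lambda(t)=\mathrm{Law}(X^\lambda_t)$ with $dX=b_\lambda\,dt+\sqrt{2\nu_\lambda}\,dW$ and estimates $\mathbb E|X^\lambda_t-X^\lambda_s|$. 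The uniform second-moment bound makes $\{m_\lambda(t):\lambda\ge 2C_o,\ t\in[0,T]\}$ tight, hence by Arzel\`a--Ascoli in $C^0([0,T],\mathcal P_1)$ the family $(m_\lambda)$ is relatively compact.

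The heart of the proposition is the $L^\infty$ bound, which I would obtain by the argument going back to \cite{LL07mf} (see also \cite{CarH}): rewrite the Fokker--Planck equation in non-divergence form, $\partial_t m_\lambda - \nu_\lambda\Delta m_\lambda - b_\lambda\cdot Dm_\lambda - (\dive b_\lambda)\,m_\lambda = 0$, and show that $\dive_x b_\lambda$ is bounded \emph{above} uniformly in $\lambda$. This is where Proposition \ref{prop.3} enters: $\dive_x b_\lambda = \lambda\,\tr\big(D^2_{pp}H(\lambda Du_\lambda,x)\,D^2u_\lambda\big) + \tr\,D_{px}H(\lambda Du_\lambda,x)$; the second term is controlled by \eqref{Hyp2} and $\|\lambda Du_\lambda\|_\infty\le 4C_o$, while for the first term the one-sided bound $\lambda D^2u_\lambda\le \tilde C I_d$ together with $D^2_{pp}H\ge 0$ gives $\tr(D^2_{pp}H\,\lambda D^2 u_\lambda)\le \tilde C\,\tr\,D^2_{pp}H(\lambda Du_\lambda,x)$, which is bounded since the momenta $\lambda Du_\lambda$ range in a fixed compact set. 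Thus $\dive_x b_\lambda\le C_T$ uniformly in $\lambda$, and then either comparing $m_\lambda$ with the spatially constant super-solution $e^{C_Tt}\|m_0\|_\infty$, or running the standard iteration $\frac{d}{dt}\|m_\lambda(t)\|_{L^q}^q\le (q-1)C_T\|m_\lambda(t)\|_{L^q}^q$ and letting $q\to\infty$, yields $\|m_\lambda(t)\|_\infty\le e^{C_TT}\|m_0\|_\infty$ for $t\in[0,T]$.

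The main obstacle is exactly this last step: handling the product $m_\lambda\,\dive b_\lambda$, that is, combining the possible degeneration of $m_\lambda$ as a measure with the possible singularity of the divergence of the drift. It is the uniform semi-concavity of $\lambda u_\lambda$ from Proposition \ref{prop.3} that makes this work, and it can be exploited only through the one-sided estimate (no lower bound on $D^2u_\lambda$ is available in the vanishing-viscosity regime); some care is also needed to run the comparison or the $L^q$ iteration on the whole of $\R^d$ rather than on a bounded domain, which is legitimized by the decay of $m_\lambda$ provided by the moment estimate established first.
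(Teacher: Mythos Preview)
Your proposal is correct and follows essentially the same route as the paper. Both proofs read the three assertions off the uniform boundedness of the drift $D_pH(\lambda Du_\lambda,x)$ (via Proposition~\ref{prop.2}); the paper obtains the second-moment bound through the SDE representation (which you list as an alternative), and for the $L^\infty$ bound the paper rewrites the Fokker--Planck equation in non-divergence form and bounds the zero-order coefficient ${\rm Tr}\big(D^2_{pp}H(\lambda Du_\lambda,x)\,\lambda D^2u_\lambda\big)+{\rm Tr}\,D_{px}H(\lambda Du_\lambda,x)$ from above exactly as you do, using Proposition~\ref{prop.3} for the first term and \eqref{Hyp2} with Proposition~\ref{prop.2} for the second, concluding by the maximum principle. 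Your treatment of the equicontinuity in ${\mathcal P}_1$ is actually more explicit than the paper's, which leaves that step implicit.
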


\begin{proof} We do the proof again for smooth data. 
For the bound on the second order moment of $m_\l(t)$ on $[0,T]$ we recall that $m_\l(t)$ is the law  $\mathcal L(X_t)$ of the solution $X_t$ of the SDE
\[
dX_t = - D_pH(\l Du_\l(X_t),X_t) dt + \sqrt{2 \nu_\l} dW_t , \qquad \mathcal L(X_0)=m_0 ,
\]
where $W_t$ is a standard Brownian motion. Since the vector field $D_pH(\l Du_\l,x)$ is uniformly bounded by Proposition \ref{prop.2}, we have $\E [|X_t|^2] \leq C(\E[ |X_0|^2] + 1)e^{Ct}$. Then
\[
M_2( m_\l(t))= \int_{\R^d} |x|^2m_\l(x,t)dx = \E |X_t|^2 \leq C(M_2( m_0) + 1)e^{CT} , \forall t\leq T .
\]
For the $L^\infty$ bound on $m_\l$, we rewrite the equation of $m_\l$ as 
\begin{align*}
& \partial_t m_\lambda-\nu_\l\Delta m_\l -m_\l
{\rm Tr} \left(D_{pp}H(\l Du_\l,x)D^2 u_\l+ D_{px}H(\l Du_\l,x) \right)\\
& \qquad  -Dm_\l \cdot D_pH(Du_\l,x)
=0 \qquad {\rm in }\; \R^d\times (0,+\infty)
\end{align*}
where, by convexity of $H$, \eqref{HypH} and  Proposition \ref{prop.3} on the one hand, and  by \eqref{Hyp2} and Proposition \ref{prop.2} on the other hand, we have 
$$
{\rm Tr} \left(D_{pp}H(\l Du_\l,x)D^2 u_\l\right) \leq C\; \text{and}\;  {\rm Tr} \left(D_{px}H(\l Du_\l,x)\right)\leq C,
$$
where $C$ does not depend on 
$\l$. Therefore, by the maximum principle again, the 
$L^\infty$ norm of $m_\l$ has at most an exponential growth in time, uniform with respect to $\l$. 
 \end{proof}

 \begin{proof}[Proof of Theorem \ref{theo_main1}] By Proposition \ref{prop.mlcompact}, $(m_\l)$ is relatively compact in $C^0([0,T], {\mathcal P}_1(\R^d))$ and is bounded in $L^ \infty(\R^{d}\times [0,T])$ for any $T>0$.
Let $(m_{\l_n})$ be a converging subsequence in $C^0([0,T],{\mathcal P}_1)$ for any $T>0$. Then  $(m_{\l_n})$ converges to $m$ in $L^\infty-$weak-* on $\R^d\times [0,T]$ for any $T>0$. In particular, by our continuity assumption on $F$ in \eqref{assF0}, the maps $(x,t)\to F(x,m_{\l_n}(t))$ and $(x,t)\to D_xF(x,m_{\l_n}(t))$ converge locally uniformly to the maps  $(x,t)\to F(x,m(t))$ and $(x,t)\to D_xF(x,m(t))$ respectively. 

As $u_\l$ solves \eqref{mfglamBIS}, $w_\l:= \l u_\l$ solves 
$$
-\l^{-1}\partial_t w_\lambda-\l^{-1}\nu_\l \Delta w_\l+w_\lambda +\l^{-1}H(Dw_\l,x)=   F(x, 
 m_\lambda(t)) \qquad {\rm in }\; \R^{d}\times (0,+\infty).
$$
Hence the half-relaxed limits $w^*$ and $w_*$ of $(w_\l)$ (which are locally uniformly bounded in view of Proposition \ref{prop.1}) are respectively sub- and super-solutions of the trivial equation 
$$
w= F(x, m(t)) \qquad {\rm in }\; \R^{d}\times (0,+\infty).
$$
This proves the locally uniform convergence of $(\l_n u_{\l_n})$ to $F(x, m)$. 

Next we use Theorem 3.3.3 in \cite{CS}. By  Proposition \ref{prop.1} $(\l u_\l)$ is uniformly locally bounded,  and by Proposition \ref{prop.3} it is uniformly semi-concave in space (locally in time). Then any sequence  $(\l_n u_{\l_n})$ has a subsequence such that $(\l_n Du_{\l_n})$ converges to $D_xF(x, m)$ a.e. and therefore also in $L^1_{loc}(\R^{d}\times [0,+\infty))$. 
One easily derives from this that $m$ solves \eqref{limitFP1} in the sense of distribution.
 \end{proof}

\begin{Remark}
\label{existence}
\upshape    
The existence of a solution $(u_\l, m_\l)$ of the system \eqref{mfglamBIS} can be proved by approximating with solutions of the following system with finite time-horizon
\be\label{mfglamT}
\left\{ \begin{array}{l}
\ds -\partial_t u^T-\nu_\l\Delta u^T+\lambda u^T  + \l^{-1} H(\l Du^T,x) = F(x,m^T(t)) \quad {\rm in }\; \R^d\times (0,T)
\\
\ds \partial_t m^T-\nu_\l\Delta m^Tl -\dive(m^T
D_pH(\l Du^T,x)
)=0 \qquad {\rm in }\; \R^d\times (0,T)\\
\ds u^T(T)=0, \qquad m^T(0)=m_0, \qquad {\rm in }\;  \R^d .
\end{array}\right.
\ee
The existence of a solution  $(u^T, m^T)$ for fixed $\l >0$ {follows from standard argument (see for instance Lions' course of Nov. 12, 2010  \cite{LiCourse})}.  
 The estimates of Propositions \ref{prop.1}, \ref{prop.2}, \ref{prop.3}, and \ref{prop.mlcompact} hold for $(u^T, m^T)$ with the same proof (using comparison principles for Cauchy problems with constant terminal data). Then there is enough compactness to pass to the limit as $T\to +\infty$, as in the proof of Theorem \ref{theo_main1}, and see that the limit satisfies \eqref{mfglamBIS}.
\end{Remark}

\subsection{Examples}
\label{ex and app}

In this section we present several examples of coupling functions $F$ 
 of the form 
\be
\label{F}
F(x,m) = k * m(x,t) = \int_{\R^d}  k(x-y)m(dy) ,
\ee
where the convolution kernel $k$ can take different forms and is at least globally Lipschitz continuous. 
$$
H(p)=|p|^2/2-v(x)\cdot p ,
$$
with the vector field $v$ bounded together with its first and second derivatives.
Then 
$$
D_pH(D_xF(x,m))= \int_{\R^d}  Dk(x-y)m(dy) - v(x) ,
$$
and the limit equation \eqref{limitFP1} becomes
\begin{equation}
\label{AE}
\left\{ \begin{array}{l}
\ds \partial_t m +     \dive\left(m ( v -Q[m])\right)=0 \; {\rm in }\; \R^d\times (0,+\infty),\;  Q[m](x,t) =  \int_{\R^d} D k(x-y)m(y,t) dy ,\\
\ds m(0)=m_0,\qquad {\rm in }\;  \R^d.
\end{array}\right.
\end{equation}
 Note that the condition \eqref{assF0} is satisfied. 
In addition, we suppose that $k$ is bounded, which implies condition \eqref{assF}, and the semi-concavity of $k$, which ensure 
 condition \eqref{assFsc}.  Under these assumptions Theorem \ref{theo_main1} holds. Next we review some special cases that arise in applications.

\subsubsection{The aggregation equation}
\label{aggreg}

 The special case of \eqref{AE} with $v\equiv 0$ is often called the aggregation equation. For suitable choices of the kernel $k$ it models the collective behaviour of groups of animals, see, e.g., \cite{BV06, TBL} and the references therein.
Most kernels used in the aggregation models are of the form $k(x)= \phi
(|x|)$ with $\phi$ smooth but $\phi'(0)$ not necessarily 0, so 
$k$ can be not differentiable in the origin.
However, most of them satisfy the assumptions above.

\begin{Example} \label{ex:exp} 
\upshape
The kernel 
 \be\label{k1}
 k(x) = \a e^{-a|x|} , \qquad 
  a>0,
 \ee
considered in \cite{BV06, TBL} (see also the references therein), is bounded, globally Lipschitz continuous, and semiconcave  if $\a >0$. 
Note that the case $\a >0$ describes repulsion among individuals at all distances, because $k(x)=\phi 
(|x|)$ and $\phi'(r)<0$ implies repulsion.
The case $\a <0$, describing attraction, does not fit into our theory because  $k\sim |x|$ near 0, so it is not semiconcave, which is consistent with the fact that solutions of the aggregation equation \eqref{AE} are known to blow up in finite time for suitable initial data (at least in dimension $d=1$, see \cite{BV06}).
\end{Example}

\begin{Example}
\upshape
The kernel 
 \be\label{k2}
 k(x) = -|x| e^{-a|x|} , \qquad a>0,
 \ee
considered in \cite{BV06} 
 is also bounded, globally Lipschitz continuous and semiconcave because $k\sim -|x|$ near 0. Note that this kernel describes repulsion at small distance and attraction at distance $|x|>1/a$. Our theory is consistent with the global existence of solutions of the aggregation equation \eqref{AE} in this case, at least for $d=1$, proved in \cite{BV06}.
\end{Example}
\begin{Example}\label{ex:morse} 
\upshape
To model repulsion at short distance and attraction at medium range, decaying at infinite, a  commonly used kernel is the so-called Morse potential
 \be\label{k3}
 k(x) = e^{-|x|} - G e^{-|x|/L}    , \qquad 0<G<1, \quad L>1,
 \ee
  see \cite{BT} and the references therein. It is again bounded and globally Lipschitz continuous. It is  also semiconcave because $k\sim 1 - G +|x|(G/L-1)$ near 0, and  $G/L-1<0$.   
  \end{Example}

\subsubsection{Models of crowd dynamics}
\label{crowd}

There is a large and fast growing literature on
 models of the interactions among pedestrians, see the survey in the book \cite{CPTbook}.  They split into first order models, where the velocity of the pedestrian is a prescribed function of the density of individuals and position, and second order models, 
 where the acceleration is prescribed. In the next Section \ref{basic2BIS} we study second order models, focusing on the celebrated Cucker-Smale model of flocking, see Remark \ref{crowd2} for more references on crowd dynamics. 
 
A first order model fitting in the assumptions of the present section is 
 the one proposed in \cite{CPT11}, where the velocity of each agent at position $x$ and time $t$
is of the form $v(x) - Q[m(t)](x)$, 
 $v$ being the desired velocity of the pedestrian, and the other term $Q$ accounting for  the interaction with the other agents. If we assume that $Q$ does not depend on the angular focus of the walker in  position $x$, then the model   in \cite{CPT11} can be written as
 $$Q[m(t)](x)=\int_{\R^d} D k(x-y) m(y,t) dy , \quad k(x)=\phi(|x|) 
  $$
 with $\phi\in Lip([0,+\infty))$, decreasing in $(0,r)$, increasing in $(r,R)$, and constant in $[R,+\infty)$, so with a behavior  similar to the Morse kernel \eqref{k3} and to \eqref{k2}.  If we take $\phi\in C^2((0,+\infty))$ with $\phi''$ bounded, then $F$ given by $\eqref{F}$ satifies the assumptions of Theorem \ref{theo_main1}.

\subsubsection{On uniqueness of solutions}
If we assume in addition that $k\in C^2(\R^d)$ with $D^2k$ bounded, then the limit equation \eqref{AE} has a unique solution with compact support in space, as observed in Remark \ref{cor:uniq}. This occurs, for instance, in Section \ref{crowd} if $\phi\in C^2([0,+\infty))$ and $\phi'(0)=\phi''(0)=0$ (recall that   $k(x)=\phi(|x|)$). Uniqueness is also  known for the aggregation equation with kernels like those of  Section \ref{aggreg}: see \cite{BV06,CR} and the references therein. However, we  expect uniqueness of solutions to the Mean-Field Game system \eqref{mfgIntro} with $F$ given by $\eqref{F}$ only for the exponential kernel \eqref{k1}, and not in all other models where there is attraction among individuals in some range of densities. In fact, the uniqueness of solutions in Mean Field Games is strongly connected with a property of mononicity of $F$ discovered by Lasry and Lions \cite{LL07mf}. For coupling functions of the form $\eqref{F}$ such monotonicity is equivalent to the property that $k$ is a positive semidefinite kernel, namely,
\[
\int_{\R^d} \int_{\R^d}  k(x-y)v(y)v(x)\, dy dx \geq 0   \qquad \forall \, v .
\]
This property is deeply studied and has several characterizations. If $k(x-y)=\psi(|x-y|^2)$ with $\psi \in C^\infty((0,+\infty))$ and continuous in $0$, then it is known that $k$ is a positive semidefinite kernel if and only if $\psi$ is completely monotone, namely, $\psi'\leq 0$ and all other derivatives have alternating signs \cite{Fass}. In all examples describing attraction it occurs that $\phi'$, and therefore $\psi'$, is instead positive in some range. Then the MFG is not expected to have a unique solution and our result also says that the distance among the possibly multiple solutions of the MFG system tends to 0 as $\l$ becomes large.

\section{Convergence for some MFGs  of acceleration towards the Cucker-Smale model}
\label{basic2BIS}

For $\l >0$ and $0<T<+\infty$, we now consider the MFG systems of acceleration, which is written in a formal way as:
\be
\label{mfglam2BIS}
\left\{ \begin{array}{l}
\ds -\partial_t u_\l +\l u_\l -v\cdot D_xu_\l +\frac\l 2 |D_vu_\lambda|^2 = F(x,v, m_\lambda(t)) \qquad {\rm in }\; \R^{2d}\times (0,T)
\\
\ds \partial_t m_\lambda 
+ v\cdot D_xm_\l -\dive_v(m_\lambda 
\l D_v u_\lambda)=0 \qquad {\rm in }\; \R^{2d}\times (0,+\infty)\\
\ds m_\lambda(0)=m_0,\; u_\l(x,v,T)=0 \qquad {\rm in }\;  \R^{2d} .
\end{array}\right.
\ee
Here the  space variables are  denoted by $(x,v)$, with $(x,v)\in \R^d\times \R^d$.  System \eqref{mfglam2BIS} models a Nash equilibrium of a game in which the (small) players, given the flow $(m_\l(t))$ of probability measures on $\R^{2d}$, try to minimize over $\gamma$ the quantity 
$$
\int_0^T e^{-\l t} \left(\frac{1}{2\l}|\ddot \gamma(t)|^2 + F(\gamma(t), \dot \gamma(t), m_\l(t))\right) dt, 
$$
while the flow $(m_\l(t))$ is the evolution of the  positions and the velocities of the players when they play in an optimal way. 

We assume that the coupling function $F$ is a cost associated to the Cucker-Smale model:
\be
\label{CSBIS}
F(x,v,m(t)) = k * m(x,v,t) = \int_{\R^{2d}}  k(x-y, v-v_*)m(y,v_*,t) dy dv_* , \quad k(x,v)=\frac{|v|^2}{g(x)} ,
\ee
where $g:\R^d\to \R$ is bounded below by a positive constant, is even, smooth and such that $|Dg|/g$ is globally bounded. For instance,
\be
\label{CS2BIS}
g(x) = (\a +|x|^2)^\b, \quad  \a>0 , \beta \geq 0 .
\ee
In this case $D_vk(x,v)=\frac{2v}{g(x)}$ and so
$$D_v F(x,v,m(t)) = (D_vk)* m (x,v,t) = \int_{\R^{2d}}  2\frac{( v-v_*)}{g(x-y)}m(y,v_*,t) dy dv_*.$$

The aim of this section is to show that $m_\lambda\to m$ as $\lambda\to +\infty$, where $m$ solves the continuous version of the Cucker-Smale model:  
\be
\label{limitFPBIS}
\left\{ \begin{array}{l}
\ds \partial_t m  + v\cdot D_xm -\dive_v(m D_vF(x,v, m))=0 \qquad {\rm in }\; \R^{2d}\times (0,+\infty),\\
\ds m(0)=m_0,\qquad {\rm in }\;  \R^{2d}.
\end{array}\right.
\ee

\subsection{The convergence result}

Throughout this section, we assume that $m_0$ and $F$ satisfy the following conditions: 
\begin{align}\label{Hypm0BIS}
m_0\in {\mathcal P}(\R^{2d})\; \mbox{\rm has a compact support}, 
\end{align}
and 
\begin{align}\label{HypFBIS}
\mbox{\rm $F$ is given by \eqref{CSBIS} where $g:\R^d\to \R$ is bounded below by a positive constant,}\\
\mbox{\rm is even, smooth, and $|Dg|/g$ is globally bounded.} \notag
\end{align}

Let us start by describing what we mean by a weak (variational) solution of the MFG problem. Let $\Gamma=C^1([0,T],\R^d)$ endowed with usual $C^1$ norm and ${\mathcal P}(\Gamma)$ be the set of Borel probability measures on $\Gamma$. We consider, for $\eta\in {\mathcal P}(\Gamma)$, 
$$
\J_\l(\eta) = \int_{\Gamma} \int_0^{T} e^{-\l t} \frac{1}{2\l} |\ddot \gamma(t)|^2dt\eta(d\gamma)+  \int_0^{T} e^{-\l t}{\mathcal F}(m^\eta(t))dt, 
$$
where $m^\eta(t)=\tilde e_t\sharp \eta$ (with $\tilde e_t:\Gamma\to \R^{2d}$, $\tilde e_t(\gamma)=(\gamma(t), \dot \gamma(t))$) and 
$$
{\mathcal F}(m) = \frac12\int_{\R^{4d}} k(x-x_*,v-v_*) m(dx,dv)m(dx_*,dv_*)\qquad \forall m\in {\mathcal P}(\R^{2d}).
$$

\begin{Lemma}\label{ExistenceMFGa} For any $\l>0$, there exists at least a minimizer $\bar \eta_\l$ of $\J_\l$ under the constraint $\tilde e_0\sharp \bar \eta_\l = m_0$. It is a weak solution of the MFG problem of acceleration, in the sense that, for $\bar \eta_\l-$a.e. $\bar \gamma\in \Gamma$,  
\begin{align}\label{pbbarg}
& \int_0^{T} e^{-\l t} (\frac{1}{2\l} |\ddot{\bar \gamma}(t)|^2+ F(\bar \gamma(t), \dot{\bar \gamma}(t),  m^{\bar \eta_\l}(t)))dt \\
& \qquad \qquad  = \inf_{\gamma\in H^2, \; (\gamma(0), \dot \gamma(0))=(\bar \gamma(0), \dot{\bar \gamma}(0))} 
 \int_0^{T} e^{-\l t} (\frac{1}{2\l} |\ddot  \gamma(t)|^2+ F( \gamma(t),  \dot \gamma(t),  m^{\bar \eta_\l}(t)))dt.\notag
 \end{align}
\end{Lemma}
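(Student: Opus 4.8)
The plan is to establish the two assertions separately: existence by the direct method in ${\mathcal P}(\Gamma)$, and the variational characterization \eqref{pbbarg} by a first-variation argument that exploits the fact that the admissibility constraint is \emph{affine} and that ${\mathcal F}$ is \emph{quadratic}.

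For existence, I would first note that the admissible set $\{\eta:\tilde e_0\sharp\eta=m_0\}$ is nonempty with $\inf\J_\l<+\infty$: taking $\eta_0$ the law of the straight lines $t\mapsto x_0+tv_0$ with $(x_0,v_0)\sim m_0$ kills the kinetic term, and since $m_0$ has compact support and $g$ is bounded below, $t\mapsto{\mathcal F}(m^{\eta_0}(t))$ is bounded on $[0,T]$. Since $k\ge0$ we have $\J_\l\ge0$, so any minimizing sequence $(\eta_n)$ satisfies $\J_\l(\eta_n)\le C$ and, using $e^{-\l t}\ge e^{-\l T}$ on $[0,T]$, the uniform bound $\int_\Gamma\int_0^T|\ddot\gamma|^2dt\,\eta_n(d\gamma)\le C'$. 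Together with $\tilde e_0\sharp\eta_n=m_0$ having compact support, this gives tightness of $(\eta_n)$: on the set $\{\gamma:\int_0^T|\ddot\gamma|^2\le M,\ (\gamma(0),\dot\gamma(0))\in\mathrm{supp}\,m_0\}$ the curves are relatively compact in $C^1$ by Arzel\`a--Ascoli (the $L^2$ bound on $\ddot\gamma$ controls $\|\dot\gamma\|_\infty$ and gives $1/2$-H\"older equicontinuity of $\dot\gamma$, hence of $\gamma$), while Markov's inequality bounds the complementary mass by $C'/M$. By Prokhorov, $\eta_n\rightharpoonup\bar\eta_\l$ along a subsequence, and $\tilde e_0\sharp\bar\eta_\l=m_0$ since $\eta\mapsto\tilde e_0\sharp\eta$ is weakly continuous. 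Finally $\J_\l$ is sequentially weakly l.s.c.: the kinetic term $\eta\mapsto\int_\Gamma E(\gamma)\,\eta(d\gamma)$, with $E(\gamma)=\int_0^Te^{-\l t}\frac1{2\l}|\ddot\gamma|^2dt\in[0,+\infty]$, is l.s.c. because $E$ is nonnegative and l.s.c. on $C^1$ (weak $L^2$-compactness of $\ddot\gamma_n$ plus distributional identification of the limit), hence a supremum of bounded continuous functionals; and for each $t$, $\tilde e_t:C^1\to\R^{2d}$ is continuous, so $\big(m^{\eta_n}(t)\big)^{\otimes2}\rightharpoonup\big(m^{\bar\eta_\l}(t)\big)^{\otimes2}$, whence ${\mathcal F}(m^{\bar\eta_\l}(t))\le\liminf_n{\mathcal F}(m^{\eta_n}(t))$ (as $k\ge0$ is continuous) and Fatou closes the coupling term. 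Thus $\bar\eta_\l$ is a minimizer.

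For \eqref{pbbarg}, write $\bar m(t)=m^{\bar\eta_\l}(t)$. Given any admissible $\eta$ with $\J_\l(\eta)<\infty$, the curve $s\mapsto\J_\l((1-s)\bar\eta_\l+s\eta)$ is minimized at $s=0$; since ${\mathcal F}$ is quadratic, ${\mathcal F}((1-s)\bar m(t)+sm^\eta(t))$ is an explicit polynomial in $s$, and because $\J_\l<\infty$ together with the elementary bound $k(z,w)\le|w|^2/g_{\min}$ make ${\mathcal F}(\bar m(t))$, ${\mathcal F}(m^\eta(t))$ and the bilinear cross term all belong to $L^1(e^{-\l t}dt)$, one may differentiate at $s=0$. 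Using $\tfrac{\delta{\mathcal F}}{\delta m}(m)=k*m=F(\cdot,m)$ and $m^\eta(t)=\tilde e_t\sharp\eta$, the inequality $\tfrac{d}{ds}\J_\l((1-s)\bar\eta_\l+s\eta)\big|_{s=0^+}\ge0$ rewrites as
\[
\int_\Gamma{\mathcal A}(\gamma)\,\eta(d\gamma)\ \ge\ \int_\Gamma{\mathcal A}(\gamma)\,\bar\eta_\l(d\gamma),
\]
where ${\mathcal A}(\gamma):=\int_0^Te^{-\l t}\big(\tfrac1{2\l}|\ddot\gamma(t)|^2+F(\gamma(t),\dot\gamma(t),\bar m(t))\big)dt$; that is, $\bar\eta_\l$ minimizes the \emph{linear} functional $\eta\mapsto\int_\Gamma{\mathcal A}\,d\eta$ over $\{\tilde e_0\sharp\eta=m_0\}$. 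Now set $V(x_0,v_0):=\inf\{{\mathcal A}(\gamma):\gamma\in H^2,\ (\gamma(0),\dot\gamma(0))=(x_0,v_0)\}$; since ${\mathcal A}(\gamma)=+\infty$ for $\gamma\notin H^2$, this is exactly the right-hand side of \eqref{pbbarg}, and $V\in[0,+\infty)$. As ${\mathcal A}(\gamma)\ge V(\gamma(0),\dot\gamma(0))$ for every $\gamma$, we get $\int_\Gamma{\mathcal A}\,d\bar\eta_\l\ge\int_{\R^{2d}}V\,dm_0$. If \eqref{pbbarg} failed, there would be $\e>0$ with $\bar\eta_\l(B)>0$ for $B=\{\gamma:{\mathcal A}(\gamma)>V(\gamma(0),\dot\gamma(0))+\e\}$; choosing, via a measurable selection of $(\e/2)$-optimal curves for $V$, a competitor $\eta'$ that agrees with $\bar\eta_\l$ off $B$ and reroutes the curves started in $B$, one checks that $\eta'$ is admissible, $\J_\l(\eta')<\infty$ (its curves have finite ${\mathcal A}$ and $\sup_tM_{2,v}(m^{\eta'}(t))<\infty$ controls the coupling term), and $\int_\Gamma{\mathcal A}\,d\eta'<\int_\Gamma{\mathcal A}\,d\bar\eta_\l$, contradicting the linearized minimality. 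Hence ${\mathcal A}(\gamma)=V(\gamma(0),\dot\gamma(0))$ for $\bar\eta_\l$-a.e. $\gamma$, which is \eqref{pbbarg}.

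The two steps I expect to be the real obstacles are: (i) controlling the \emph{unbounded} kernel $k$ (quadratic in the velocity variable, with $1/g$ possibly decaying at infinity) --- one must verify throughout that ${\mathcal F}(m^\eta(t))$, its first variation, and the bilinear cross term are finite and lower semicontinuous along minimizing sequences and along the convex perturbations, which is precisely where the uniform bound on $\int_0^T|\ddot\gamma|^2$ and the compact support of $m_0$ intervene; and (ii) the measurable selection of $\e$-optimal trajectories for the value function $V$, which is what upgrades ``$\bar\eta_\l$ minimizes the linearized functional'' to the trajectory-wise optimality \eqref{pbbarg}.
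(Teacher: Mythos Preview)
Your existence argument coincides with the paper's: both take the straight-line competitor $t\mapsto x+tv$ to bound $\inf\J_\l$, extract a uniform kinetic bound, use the compact support of $m_0$ for tightness in ${\mathcal P}(\Gamma)$, and invoke lower semicontinuity of $\J_\l$.

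For \eqref{pbbarg} the routes genuinely diverge. You take a \emph{global} first variation along $(1-s)\bar\eta_\l+s\eta$, exploit the quadratic structure of ${\mathcal F}$ to differentiate exactly, deduce that $\bar\eta_\l$ minimizes the linear functional $\eta\mapsto\int_\Gamma{\mathcal A}\,d\eta$ over the admissible set, and then upgrade to trajectory-wise optimality via a measurable selection of $\e/2$-optimal curves for the value function $V$. The paper instead \emph{localizes}: it fixes $\gamma_0$ in the support of $\bar\eta_\l$ and a competitor curve $\gamma_1$ with the same initial data, builds a perturbation $\eta_{\ep,\delta}$ that modifies $\bar\eta_\l$ only on the $C^1$-ball $E_\ep$ around $\gamma_0$ (replacing a fraction $\delta$ of those trajectories by translates of $\gamma_1$ matched to the correct initial conditions), expands ${\mathcal F}$ using the evenness of $k$, sends $\delta\to0$ (this is the localized first variation), and then sends $\ep\to0$ using lower semicontinuity of the running cost together with the Lipschitz-type bounds \eqref{bdDF} on $F$. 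Both proofs rest on the same quadratic structure of ${\mathcal F}$ to make the first variation exact. Your argument is cleaner and shorter but, as you note, depends on the measurable-selection step; the paper's localized construction avoids selection theorems entirely (at the cost of the $\ep$-localization machinery and the estimate \eqref{bdM2vmll} to control the $F$-perturbation) and as a byproduct delivers \eqref{pbbarg} for \emph{every} $\gamma_0$ in the support of $\bar\eta_\l$, which is marginally stronger than the $\bar\eta_\l$-a.e.\ statement.
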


The link between the equilibrium condition \eqref{pbbarg} and  the MFG system \eqref{mfglam2BIS} is the following: if we set 
$$
u_\l(x,v,s)=  \inf_{(\gamma(s),\dot \gamma(s))=(x,v)} 
 \int_s^{T} e^{-\l (t-s)} (\frac{1}{2\l} |\ddot  \gamma(t)|^2+ F( \gamma(t),  \dot \gamma(t),  m^{\bar \eta_\l}(t)))dt,
$$
then the pair $(u_\l, m^{\bar \eta_\l})$ is (at least formally) a weak solution of \eqref{mfglam2BIS}, in the sense that $u_\l$ is a viscosity solution to the first equation in \eqref{mfglam2BIS} while $m^{\bar \eta_\l}$ is a solution in the sense of distribution of the second equation in \eqref{mfglam2BIS}. Existence of a solution to the equilibrium condition \eqref{pbbarg} for more general MFG systems is obtained in \cite{CM}, however under a much more restrictive growth condition on $F$. In addition, \cite{AMMT, CM} show that there exists a weak solution to the MFG system of acceleration \eqref{mfglam2BIS}. 

We postpone the (quite classical) proof of Lemma \ref{ExistenceMFGa} to the next section and proceed with the notion of solution for the kinetic equation \eqref{limitFPBIS}. Following \cite{CCR}, we say a map $m\in C^0([0,T],{\mathcal P}_2(\R^d))$ is a measure-valued solution to \eqref{limitFPBIS} if $m(t)= P^{x,v}(t)\sharp m_0$ where  $P^{x,v}(t)=(P^{x,v}_1(t),P^{x,v}_2(t))\in \R^{d}\times \R^d$ solves the ODE 
\be\label{defP}
\left\{\begin{array}{l}
\frac{d}{dt} P_1^{x,v}(t)= P_2^{x,v}(t),\\
\frac{d}{dt} P_2^{x,v}(t)= -D_vF(P_1^{x,v}(t), P_2^{x,v}(t), m(t)),\\
P^{x,v}(0)=(x,v).
\end{array}\right.
\ee
In \cite{CCR}, the authors propose several conditions under which such a measure-valued solution exists and is unique. This include the case of the Cucker-Smale model studied here, under the assumption that $m_0$ has a compact support. 

Our main result is the following: 

\begin{Theorem}\label{thm.mainBIS} Let $\bar \eta_\l$ be a minimizer of $\J_\l$ under the constraint $\tilde e_0\sharp \bar \eta_\l=m_0$. Then $(m^{\bar \eta_\l})$ converges as $\l\to +\infty$ to the unique measure-valued solution to  \eqref{limitFPBIS} in $C^0_{loc}([0,T), {\mathcal P}_2(\R^{2d}))$. 
\end{Theorem}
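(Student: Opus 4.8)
The plan is to argue by compactness, exploiting that the minimizers $\bar\eta_\l$ are concentrated on trajectories solving an optimal control problem with a cheap control and a very large discount, so that in the limit the optimal acceleration is forced to balance $-D_vF$. First I would establish a priori bounds on the minimizers, uniform in $\l$: since $\tilde e_0\sharp\bar\eta_\l=m_0$ and $m_0$ has compact support, and since $\mathcal F\ge 0$, comparing $\J_\l(\bar\eta_\l)$ with the competitor supported on straight-line-in-velocity trajectories (or on the flow of the limiting ODE \eqref{defP}) gives a bound of the form $\int_\Gamma\int_0^T e^{-\l t}\frac{1}{2\l}|\ddot\gamma|^2\,dt\,\bar\eta_\l(d\gamma)\le C/\l$, hence $\int_\Gamma\int_0^T|\ddot\gamma|^2\,dt\,\bar\eta_\l(d\gamma)\le Ce^{\l T}$ — not uniform as stated, so instead I would use the dynamic programming principle on short time intervals (as announced in the introduction): on an interval $[t_0,t_0+h]$ the discount factor $e^{-\l t}$ is essentially constant, and minimality forces each curve to be nearly optimal there, giving $\int_{t_0}^{t_0+h}|\ddot\gamma|^2\,dt\lesssim \l\cdot(\text{osc of }F)\lesssim \l\cdot(\text{Lip}_v F)\cdot h\cdot(\text{velocity bound})$, which combined with a Gronwall argument on the support (velocities and positions grow at most linearly in $t$, uniformly in $\l$, because $F$ is quadratic in $v$ with controlled coefficients) yields: for every $T'<T$ the measures $\bar\eta_\l$ restricted to $[0,T']$ are supported in a fixed compact set of $C^1$, and $\int\int_0^{T'}|\ddot\gamma|^2\,\bar\eta_\l$ stays bounded. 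The need to take $T'<T$, with the estimates degenerating near $t=T$ because of the terminal condition $u_\l(T)=0$, is exactly why the convergence is only in $C^0_{loc}([0,T),\cdot)$.

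With these bounds, $(\bar\eta_\l)$ is tight in $\mathcal P(C^1([0,T'],\R^d))$ for each $T'<T$; pass to a subsequential limit $\bar\eta$. The marginal flow $m^{\bar\eta_\l}(t)=\tilde e_t\sharp\bar\eta_\l$ then converges in $C^0([0,T'],\mathcal P_2(\R^{2d}))$ — using the uniform compact support to upgrade weak convergence to $\mathcal P_2$ convergence — and by \eqref{HypFBIS} and the explicit form of $k$, $D_vF(\cdot,\cdot,m^{\bar\eta_\l}(t))\to D_vF(\cdot,\cdot,m^{\bar\eta}(t))$ locally uniformly. Next I would pass to the limit in the Euler--Lagrange / optimality condition \eqref{pbbarg}: for $\bar\eta_\l$-a.e. $\bar\gamma$, the first-order condition for the inner minimization reads (in weak form) $\frac{d}{dt}(e^{-\l t}\frac1\l\ddot{\bar\gamma}) = -e^{-\l t}D_vF(\bar\gamma,\dot{\bar\gamma},m^{\bar\eta_\l})$, i.e. $\frac1\l\dddot{\bar\gamma} - \ddot{\bar\gamma} = -D_vF(\bar\gamma,\dot{\bar\gamma},m^{\bar\eta_\l})$ in the distributional sense along each curve. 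Testing against smooth functions and using the $L^2$ bound on $\ddot\gamma$ to control the $\l^{-1}\dddot\gamma$ term (which vanishes in the limit after integration by parts), I obtain that $\bar\eta$-a.e. curve satisfies $\ddot\gamma(t) = -D_vF(\gamma(t),\dot\gamma(t),m^{\bar\eta}(t))$ with $(\gamma(0),\dot\gamma(0))$ distributed as $m_0$. That is precisely the characteristic ODE \eqref{defP} for the limiting flow, so $m^{\bar\eta}(t)=P^{x,v}(t)\sharp m_0$ is a measure-valued solution of \eqref{limitFPBIS} in the sense of \cite{CCR}.

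Finally, uniqueness of the measure-valued solution for the Cucker--Smale model with compactly supported $m_0$ is guaranteed by \cite{CCR} (as recalled before the theorem statement), so the limit does not depend on the subsequence and the full family $(m^{\bar\eta_\l})$ converges; the convergence is in $C^0_{loc}([0,T),\mathcal P_2(\R^{2d}))$ because the compactness estimates hold on every $[0,T']$ with $T'<T$.

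\textbf{Main obstacle.} The delicate point is obtaining the a priori bounds that are \emph{uniform in $\l$} on a fixed time interval: the naive energy comparison only gives control weighted by $e^{-\l t}$, which blows up. The right tool is the dynamic programming principle applied on $\l^{-1}$-length subintervals — there the discount is harmless and near-optimality of each curve, together with the Lipschitz-in-$v$ and quadratic-in-$v$ structure of $k$, forces $\ddot\gamma$ to track $-D_vF$ up to a controlled error. Propagating this through a Gronwall estimate on the (random) trajectories to keep the support compact, while watching that the constants deteriorate as $t\to T$, is the technical heart of the argument and the reason the statement is restricted to $C^0_{loc}([0,T),\cdot)$.
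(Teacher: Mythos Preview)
Your overall strategy is essentially the one the paper uses: obtain uniform-in-$\l$ compactness on $[0,T-\ep]$ via the dynamic programming principle, extract a limit, and pass to the limit in the Euler--Lagrange optimality condition to identify each limit curve as a solution of \eqref{defP}, then invoke uniqueness from \cite{CCR}. However, two points in your execution need correction.

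\medskip
\textbf{The Euler--Lagrange equation.} The inner problem in \eqref{pbbarg} is a \emph{second-order} variational problem (the Lagrangian depends on $\ddot\gamma$), so the correct Euler--Lagrange equation is fourth order:
\[
\frac{d^2}{dt^2}\Bigl(\l^{-1}e^{-\l t}\ddot{\bar\gamma}\Bigr)=\frac{d}{dt}\Bigl(e^{-\l t}D_vF(\bar\gamma,\dot{\bar\gamma},m^{\bar\eta_\l})\Bigr)-e^{-\l t}D_xF(\bar\gamma,\dot{\bar\gamma},m^{\bar\eta_\l}),
\]
not the third-order relation $\l^{-1}\dddot{\bar\gamma}-\ddot{\bar\gamma}=-D_vF$ you wrote. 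Expanding, the paper rewrites this as
\[
\ddot{\bar\gamma}+D_vF=\l^{-1}\Bigl(-\l^{-1}\bar\gamma^{(iv)}+2\dddot{\bar\gamma}+\tfrac{d}{dt}D_vF-D_xF\Bigr),
\]
and then integrates by parts against $z\in C^\infty_c((0,T),\R^d)$ so that all the high-order terms on the right land on $z$ and are multiplied by $\dot{\bar\gamma}$ (bounded in $L^\infty$) or $D_vF$, $D_xF$ (controlled via \eqref{bdDF}). Your version also has a sign slip: it would give $\ddot\gamma=D_vF$ in the limit rather than $\ddot\gamma=-D_vF$. Neither issue is fatal to the strategy, but the passage to the limit genuinely requires handling the $\bar\gamma^{(iv)}$ term and the $D_xF$ contribution.

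\medskip
\textbf{The uniform bounds.} Your sketch via DPP on $\l^{-1}$-length subintervals captures the right intuition, but the paper carries it out through the value function $u_\l$ rather than by direct interval-chopping. Concretely: (i) DPP (Lemma~\ref{lem.DPP}) gives $\J_{\l,s}(\bar\eta_{\l,s})\le 2C_0\l^{-1}M_{2,v}(m^{\bar\eta_\l}(s))$, hence $u_\l(x,v,s)\le C\l^{-1}(1+|v|^2+M_{2,v}(m^{\bar\eta_\l}(s)))$; (ii) a velocity-perturbation argument on $u_\l$ yields $|D_vu_\l|\le C(\l^{-1/2}u_\l^{1/2}+\ep u_\l)$; (iii) a local DPP comparison shows $|\ddot{\bar\gamma}(t)|\le 2C_1(\l^{1/2}u_\l^{1/2}+\ep\l u_\l)$ along optimal curves; (iv) a Gronwall-type argument on $R_\l(t):={\rm radius}({\rm Spt}\,\pi_2\sharp m^{\bar\eta_\l}(t))$ then closes to give $\|\ddot{\bar\gamma}\|_{L^\infty([0,T-\ep])}\le C$ uniformly in $\l$. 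This yields $L^\infty$ (not just $L^2$) control of $\ddot{\bar\gamma}$ and a uniformly compact support for $m^{\bar\eta_\l}(t)$ on $[0,T-\ep]$, which is exactly what you need for tightness in $C^1$ and for the integration-by-parts step above. Your $L^2$-type sketch would in principle suffice for tightness, but the paper's $L^\infty$ bound is what makes the limit passage in the fourth-order equation clean.
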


\begin{Remark}{\rm Note that we do not prove the convergence of all the equilibria $(\bar \eta_\l)$ of \eqref{pbbarg}, but only for the minimizers of $\J_\l$. The reason is that we were not able to obtain enough estimates for the other equilibria. 
}\end{Remark}

\subsection{Proof of the convergence result}

Before starting the proof, let us note that, by our assumptions, there is a constant $C_0>0$ such that 
\be\label{ineqC0}
g \geq
C_0^{-1},
 \qquad 0\leq F\leq C_0(1+|v|^2+ M_{2,v}(m)), \qquad {\rm where}\; M_{2,v}(m):= \int_{\R^{2d}}|v|^2 m(dx,dv).
\ee
\be\label{bdDF}
|D_xF(x,v,m)|\leq C_0F(x,v,m), \qquad |D_vF(x,v,m)|\leq C_0F^{1/2}(x,v,m).
\ee
 Indeed, 
$$
|D_xF(x,v,m)|\leq \int_{\R^{2d}} |Dg(x-x_*)| \frac{|v-v_*|^2}{(g(x-x_*))^2} m(dx_*,dv_*) \leq   \|Dg/g\|_\infty F(x,v,m), 
$$
while,  as $g\geq c$ (for some $c>0$),
\begin{align*}
& |D_vF(x,v,m)|  \leq \int_{\R^{2d}} \frac{2|v-v_*|}{g(x-x_*)} m(dx_*,dv_*) \\
&\qquad  \leq \Bigl(  \int_{\R^{2d}} \frac{|v-v_*|^2}{g(x-x_*)} m(dx_*,dv_*)\Bigr)^{1/2}
\Bigl( \int_{\R^{2d}} \frac{4}{g(x-x_*)} m(dx_*,dv_*)\Bigr)^{1/2} \leq 2 
 c^{-1/2} 
F^{1/2}(x,v,m). 
\end{align*}
Throughout the proof (and unless specified otherwise), $C$ denotes a constant which may vary from line to line and depends only on $T$, $d$, $m_0$ and the constant $C_0$ in \eqref{ineqC0} and \eqref{bdDF}. 

Let us now explain the existence of a minimizer for $\J_\l$. 

\begin{proof}[Proof of Lemma \ref{ExistenceMFGa}] 
Let $\ep>0$ and $\eta_\ep$ be $\ep-$optimal in Problem \eqref{pbbarg}. We define $\eta\in {\mathcal P}(\Gamma)$ by  
$$
\int_\Gamma \phi(\gamma)\eta(d\gamma) = \int_{\R^{2d}} \phi( t\to x+tv) m_0(dx,dv) \qquad \forall \phi\in C^0_b(\Gamma).
$$
Let $\pi_2:\R^{2d}\to \R^d$ defined by $\pi_2(x,v)=v$. Then $\pi_2\sharp m^\eta(t)= \pi_2\sharp m_0$ for any $t\in [0,T]$ because, for any $\phi\in C^0_b(\R^d)$, 
\begin{align*}
\int_{\R^d} \phi(v) \pi_2\sharp m^\eta(dv,t)& = \int_{\Gamma}  \phi(\dot \gamma(t))\eta(d\gamma)=\int_{\R^{2d}}  \phi(\frac{d}{dt}(t\to x+tv)) m_0(dx,dv)= 
\int_{\R^{d}}  \phi(v) \pi_2\sharp m_0(dv).
\end{align*}
Hence, by $\ep-$optimality of $\eta_\ep$,   
$$
\J_{\l} ( \eta_\ep) \leq \ep+ \J_\l(\eta) = \ep+ \int_{\Gamma}  \int_0^{T} e^{-\l t}{\mathcal F}(m^\eta(t))dt, 
$$
where, for any $t\geq 0$, and as $\pi_2\sharp m^\eta(t)= \pi_2\sharp m_0$, 
$$
{\mathcal F}(m^\eta(t)) \leq C_0 \int_{\R^{2d}} |v-v_*|^2 m^\eta(x,v,t) m^\eta(x_*,v_*,t) \leq 2C_0 \int_{\R^{2d}} |v|^2 m^\eta(x,v,t) = 2 C_0  M_{2,v}(m_0). 
$$
This shows that 
$$
\J_{\l} (\eta_\ep) \leq  \ep+ 2 \l^{-1}  C_0  M_{2,v}(m_0).
$$ 
As ${\mathcal F}$ is nonnegative, this implies that 
$$
\int_{\Gamma} \int_0^{T}e^{-\l t} \frac{1}{2\l} |\ddot \gamma(t)|^2 dt \eta_\ep(d\gamma) \leq \J_{\l} (\eta_\ep) \leq  \ep+ 2 \l^{-1}  C_0  M_{2,v}(m_0).
$$
As $m_0$ has a compact support (say contained in $B_{R_0}$) and the set 
$$
\{\gamma \in \Gamma, \; |(\gamma(0), \dot \gamma(0))| \leq R_0, \; \int_0^{T}e^{-\l t}  |\ddot \gamma(t)|^2 dt\leq C\}
$$
is compact in $\Gamma$ for any $C$, we conclude that the family $(\eta_\ep)$ is tight. By lower semi-continuity of $\J_\l$ we can then conclude that there exists a minimizer $\bar \eta_\l$ of $\J_\l$ under the (closed)  constraint $\tilde e_0\sharp \bar \eta_\l=m_0$. Note for later use that, in view of the above estimates, 
$$
\int_{\Gamma} \int_0^{T}e^{-\l t} \frac{1}{2\l} |\ddot \gamma(t)|^2 dt \bar \eta_\l(d\gamma) \leq 2 \l^{-1}  C_0  M_{2,v}(m_0), 
$$
so that, as $m_0$ has a compact support, 
\be\label{bdM2vmll}
\sup_{t\in [0,T]} M_{2,v}(m^{\bar \eta_\l}(t)) \leq C_\l,
\ee
for some constant $C_\l$ depending on $m_0$, $C_0$ and $\l$. 

Next we show equality \eqref{pbbarg}. Let $\gamma_0$ belong to the support of $\bar \eta_\l$ and set $(x_0,v_0)=(\gamma_0(0),\dot \gamma_0(0))$. Fix $\gamma_1 \in H^2([0,T], \R^d)$ with $(\gamma_1(0),\dot \gamma_1(0))=(x_0,v_0)$. For $\ep,\delta>0$, let $E_\ep=\{\gamma\in \Gamma, \; \|\gamma-\gamma_0\|_{C^1}\leq \ep\}$, $\tilde m_{\ep}= \tilde e_t\sharp( \bar \eta_\l \lfloor  E_\ep)$ and define $\eta_{\ep,\delta}$ as the Borel measure on $\Gamma$ by  
$$
\int_\Gamma \phi(\gamma) \eta_{\ep,\delta}(d\gamma) = \int_{E_\ep^c} \phi(\gamma) \bar \eta_\l(d\gamma) +
(1-\delta) \int_{E_\ep} \phi(\gamma) \bar \eta_\l(d\gamma) + \delta \int_{\R^{2d}} \phi(\gamma_1+(x-x_0+t(v-v_0)))\tilde m_{\ep}(dx,dv,0)
$$
for any $ \phi\in C^0_b(\Gamma)$. 
Let $\hat m_\ep(t)$ be the Borel measure on $\R^{2d}$ defined by 
$$
\int_{\R^{2d}} \phi(x,v)\hat m_\ep(dx,dv,t)= \int_{\R^{2d}} \phi(\gamma_1(t)+x-x_0, \dot \gamma_1(t)+v-v_0)\tilde m_{\ep}(dx,dv,0),\qquad \forall \phi\in C^0_b(\R^{2d}).
$$
We note that
\be\label{zqeulsirdigc}
m^{\eta_{\ep,\delta}}(t)=  m^{\bar \eta_\l}(t)+ \delta (\hat m_\ep(t)-\tilde m_\ep(t)), \qquad m^{\eta_{\ep,\delta}}(0)=m_0.
\ee
Hence, testing the optimality of $\bar \eta_\l$ for $\J_\l$ against  $\eta_{\ep,\delta}$ and using the definition of $\eta_{\ep,\delta}$, we obtain 
\begin{align*}
& \delta \int_{E_\ep} \int_0^T e^{-\l t}\frac{1}{2\l}|\ddot \gamma(t)|^2 dt \bar \eta_\l(d\gamma) +\int_0^T e^{-\l t} {\mathcal F}(m^{\bar \eta_\l}(t))dt \\
& \qquad \leq     \delta (\int_{\R^{2d}} \tilde m_\ep(dx,dv,0)) (\int_0^T e^{-\l t}\frac{1}{2\l}|\ddot \gamma_1(t)|^2 dt)+  \int_0^T e^{-\l t} {\mathcal F}(m^{\eta_{\ep,\delta}}(t))dt . 
\end{align*}
By definition of ${\mathcal F}$ and the fact that $k$ is even, \eqref{zqeulsirdigc} implies that 
\begin{align*}
& \delta \int_{E_\ep} \int_0^T e^{-\l t}\frac{1}{2\l}|\ddot \gamma(t)|^2 dt \bar \eta_\l(d\gamma)  \leq    \delta (\int_{\R^{2d}} \tilde m_\ep(dx,dv,0)) (\int_0^T e^{-\l t}\frac{1}{2\l}|\ddot \gamma_1(t)|^2 dt)\\
&  +  \delta \int_0^Te^{-\l t} \int_{\R^{4d}} k(x-x_*,v-v_*)m^{\bar \eta_\l}(dx_*,dv_*,t)(\hat m_\ep-\tilde m_\ep)(dx,dv,t) dt\\
&   +\frac{\delta^2}{2}\int_0^Te^{-\l t} \int_{\R^{4d}} k(x-x_*,v-v_*)(\hat m_\ep-\tilde m_\ep)(dx_*,dv_*,t) 
(\hat m_\ep-\tilde m_\ep)(dx,dv,t) dt. 
\end{align*}
We divide by $\delta>0$ and let $\delta\to 0$ to obtain, using the definition of $F$: 
\begin{align*}
&  \int_{E_\ep} \int_0^T e^{-\l t}\frac{1}{2\l}|\ddot \gamma(t)|^2 dt \bar \eta_\l(d\gamma)  \leq     (\int_{\R^{2d}} \tilde m_\ep(dx,dv,0)) (\int_0^T e^{-\l t}\frac{1}{2\l}|\ddot \gamma_1(t)|^2 dt)\\
& \qquad +   \int_0^Te^{-\l t}\int_{\R^{2d}} F(x,v,m^{\bar \eta_\l}(t)) (\hat m_\ep(dx,dv,t)-\tilde m_\ep(dx,dv,t))dt. 
\end{align*}
Rearranging, we find by the definition of $\tilde m_\ep$ and $\hat m_\ep$: 
\begin{align}\label{larejznrdfc}
&  \int_{E_\ep} \int_0^T e^{-\l t}(\frac{1}{2\l}|\ddot \gamma(t)|^2 + F(\gamma(t), \dot \gamma(t), m^{\bar \eta_\l}(t))) dt \ \bar \eta_\l(d\gamma) \\
&\qquad  \leq    
 \int_{\R^{2d}} \int_0^T e^{-\l t}(\frac{1}{2\l}|\ddot \gamma_1(t)|^2 + F(\gamma_1(t)+x-x_0, \dot \gamma_1(t)+v-v_0, m^{\bar \eta_\l}(t))) dt\ \tilde m_\ep(dx,dv,0)
\end{align}
Fix $\kappa>0$ small. By lower-semicontinuity on $\Gamma$ of the functional 
$$
\gamma \to \int_0^T e^{-\l t}(\frac{1}{2\l}|\ddot \gamma(t)|^2 + F(\gamma(t), \dot \gamma(t), m^{\bar \eta_\l}(t))) dt,
$$ 
we have, for any $\ep>0$ small enough, that, for any $\gamma\in E_\ep$,
\begin{align*}
& \int_0^T e^{-\l t}(\frac{1}{2\l}|\ddot \gamma_0(t)|^2 + F(\gamma_0(t), \dot \gamma_0(t), m^{\bar \eta_\l}(t))) dt \\
&\qquad\qquad\qquad  \leq \int_0^T e^{-\l t}(\frac{1}{2\l}|\ddot \gamma(t)|^2 + F(\gamma(t), \dot \gamma(t), m^{\bar \eta_\l}(t))) dt+\kappa.
\end{align*}
On the other hand, by the regularity of $F$ in \eqref{bdDF} and the bound on $M_{2,v}(m^{\bar \eta_\l}(t))$ in \eqref{bdM2vmll}, we have, for $|(x,v)|\leq \ep$ and $\ep\in (0,1)$, 
\begin{align*}
& \int_0^T e^{-\l t} F(\gamma_1(t)+x-x_0, \dot \gamma_1(t)+v-v_0, m^{\bar \eta_\l}(t)) dt \\
&\qquad  \leq  \int_0^T e^{-\l t} F(\gamma_1(t), \dot \gamma_1(t), m^{\bar \eta_\l}(t)) dt+ C(\gamma_1,\l)\ep. 
\end{align*}
Plugging the inequalities above into \eqref{larejznrdfc} gives 
\begin{align*}
& \bar \eta_\l (E_\ep)\Bigl(  \int_0^T e^{-\l t}(\frac{1}{2\l}|\ddot \gamma_0(t)|^2 + F(\gamma_0(t), \dot \gamma_0(t), m^{\bar \eta_\l}(t))) dt+ \kappa \Bigr)  \\
&\qquad  \leq    
(\int_{\R^{2d}}\tilde m_\ep(dx,dv,0)) \Bigl(\int_0^T e^{-\l t}(\frac{1}{2\l}|\ddot \gamma_1(t)|^2 + F(\gamma_1(t), \dot \gamma_1(t), m^{\bar \eta_\l}(t))) dt+ C(\gamma_1,\l)\ep\Bigr). 
\end{align*}
As $\bar \eta_\l (E_\ep)= (\int_{\R^{2d}}\tilde m_\ep(dx,dv,0))$, we can divide the inequality above by this quantity (which is positive since $\gamma_0$ is in the support of $\bar \eta_\l$) and then let $\ep\to 0$, $\kappa\to0$ to obtain
\begin{align*}
&  \int_0^T e^{-\l t}(\frac{1}{2\l}|\ddot \gamma_0(t)|^2 + F(\gamma_0(t), \dot \gamma_0(t), m^{\bar \eta_\l}(t))) dt  \\
&\qquad  \leq    \int_0^T e^{-\l t}(\frac{1}{2\l}|\ddot \gamma_1(t)|^2 + F(\gamma_1(t), \dot \gamma_1(t), m^{\bar \eta_\l}(t))) dt, 
\end{align*}
which gives \eqref{pbbarg}. 
\end{proof}

From now on we fix $\bar \eta_\l$ a minimizer of $\J_\l$ under the constraint $\tilde e_0\sharp \bar \eta_\l=m_0$ and set 
$$
u_\l(x,v,s)=  \inf_{\gamma\in H^2, (\gamma(s),\dot \gamma(s))=(x,v)} 
 \int_s^{T} e^{-\l (t-s)} (\frac{1}{2\l} |\ddot  \gamma(t)|^2+ F( \gamma(t),  \dot \gamma(t),  m^{\bar \eta_\l}(t)))dt.
 $$
 
We now note that this value function is bounded: 
 
 \begin{Lemma}\label{lem.bds} We have 
 \be\label{oaisjdg}
 \J_\l(\bar \eta_\l) \leq 2C_0\l^{-1}  M_{2,v}(m_0), 
 \ee
and, for any $0\leq s\leq t\leq T$, 
\begin{align}\label{lzaekj:nrsfd}
M_{2,v}(m^{\bar \eta_\l}(t))= & \int_{\R^{2d}} |v|^2 m^{\bar \eta_\l}(dx,dv,t) \leq  2 (1+  4C_0 \l^{-1}  e^{\l (t-s)})  M_{2,v}(m^{\bar \eta_\l}(s))
\end{align} 
 and 
 $$
0\leq u_\l(x,v,s)\leq C\l^{-1}(1+|v|^2+M_{2,v}(m^{\bar \eta_\l}(s))). 
$$
\end{Lemma}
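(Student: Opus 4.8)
The plan is to prove the three estimates in the order stated, each time testing the optimality of $\bar\eta_\l$ against a \emph{free-transport} competitor, i.e. curves moving in a straight line at constant velocity, which carry no acceleration cost. The first bound \eqref{oaisjdg} is essentially already in the proof of Lemma \ref{ExistenceMFGa}: the measure $\eta$ defined there by pushing $m_0$ forward along the maps $t\mapsto x+tv$ satisfies $\pi_2\sharp m^\eta(t)=\pi_2\sharp m_0$ for all $t$, hence ${\mathcal F}(m^\eta(t))\le 2C_0M_{2,v}(m_0)$ and $\J_\l(\eta)=\int_0^Te^{-\l t}{\mathcal F}(m^\eta(t))\,dt\le 2C_0\l^{-1}M_{2,v}(m_0)$; since $\eta$ satisfies the constraint $\tilde e_0\sharp\eta=m_0$, minimality of $\bar\eta_\l$ gives $\J_\l(\bar\eta_\l)\le\J_\l(\eta)$, which is \eqref{oaisjdg}.

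For \eqref{lzaekj:nrsfd}, fix $s\in[0,T]$ and let $\Phi_s:\Gamma\to\Gamma$ be the continuous map that leaves a curve unchanged on $[0,s]$ and replaces it on $[s,T]$ by its tangent line $t\mapsto\gamma(s)+(t-s)\dot\gamma(s)$; set $\tilde\eta_s:=\Phi_s\sharp\bar\eta_\l$, which is again admissible ($\tilde e_0\sharp\tilde\eta_s=m_0$). I would write $\J_\l$ as the sum of its integrals over $[0,s]$ and over $[s,T]$. Because ${\mathcal F}$ depends on the flow only through $m(t)$ at the single time $t$, while the acceleration enters through a time integral, and since $m^{\tilde\eta_s}(t)=m^{\bar\eta_\l}(t)$ and $\ddot{(\Phi_s\gamma)}=\ddot\gamma$ for $t\le s$, the $[0,s]$-contributions to $\J_\l(\bar\eta_\l)$ and $\J_\l(\tilde\eta_s)$ coincide, so $\J_\l(\bar\eta_\l)\le\J_\l(\tilde\eta_s)$ reduces to an inequality between the $[s,T]$-contributions alone. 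On $[s,T]$ the curves $\Phi_s\gamma$ have zero acceleration, while $\pi_2\sharp m^{\tilde\eta_s}(t)=\pi_2\sharp m^{\bar\eta_\l}(s)$ gives ${\mathcal F}(m^{\tilde\eta_s}(t))\le 2C_0M_{2,v}(m^{\bar\eta_\l}(s))$; combining this with ${\mathcal F}\ge 0$ yields
\[
\int_\Gamma\int_s^T e^{-\l(t-s)}|\ddot\gamma(t)|^2\,dt\;\bar\eta_\l(d\gamma)\ \le\ 4C_0\,M_{2,v}(m^{\bar\eta_\l}(s)).
\]
Then from $\dot\gamma(t)=\dot\gamma(s)+\int_s^t\ddot\gamma(r)\,dr$ and Cauchy--Schwarz with weight $e^{\l(r-s)}$ one has $|\dot\gamma(t)|^2\le 2|\dot\gamma(s)|^2+2\l^{-1}e^{\l(t-s)}\int_s^Te^{-\l(r-s)}|\ddot\gamma(r)|^2\,dr$; integrating against $\bar\eta_\l$, using $M_{2,v}(m^{\bar\eta_\l}(t))=\int_\Gamma|\dot\gamma(t)|^2\bar\eta_\l(d\gamma)$ (finite by \eqref{bdM2vmll}) and the previous display, gives exactly \eqref{lzaekj:nrsfd}.

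For the bound on $u_\l$: the lower bound $u_\l\ge 0$ is immediate because $F\ge 0$ by \eqref{ineqC0} and the running cost is nonnegative. For the upper bound I would use the free-transport curve $t\mapsto x+(t-s)v$ as a competitor in the definition of $u_\l(x,v,s)$ together with $F(y,v,m)\le C_0\int|v-v_*|^2m(dx_*,dv_*)\le 2C_0(|v|^2+M_{2,v}(m))$, obtaining $u_\l(x,v,s)\le 2C_0\int_s^Te^{-\l(t-s)}(|v|^2+M_{2,v}(m^{\bar\eta_\l}(t)))\,dt$; plugging in \eqref{lzaekj:nrsfd} the factor $e^{\l(t-s)}$ is absorbed by $e^{-\l(t-s)}$, and integrating gives $u_\l(x,v,s)\le C\l^{-1}(1+|v|^2+M_{2,v}(m^{\bar\eta_\l}(s)))$ for $\l\ge 1$.

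The main obstacle is the middle estimate. It is tempting to argue trajectory by trajectory through the equilibrium condition \eqref{pbbarg} and dynamic programming, but then the cost compared against is $F(\cdot,\cdot,m^{\bar\eta_\l}(t))$ evaluated at the \emph{true} equilibrium measure, so after integrating in $m^{\bar\eta_\l}(\cdot,s)$ one is left with $\int_s^Te^{-\l(t-s)}M_{2,v}(m^{\bar\eta_\l}(t))\,dt$ on the right-hand side and the resulting Gronwall inequality does not close (except for very short horizons). Replacing the entire flow after time $s$ by free transport — i.e. comparing probability measures on curves rather than individual curves — is precisely what decouples the two time intervals and freezes the velocity marginal; the only genuinely technical point is to make the construction of $\Phi_s$ and the claim that the $[0,s]$-contributions coincide rigorous (measurability of $\Phi_s$, admissibility of $\tilde\eta_s$, and the decomposition of $\J_\l$).
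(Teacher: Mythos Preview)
Your proof is correct and follows the same strategy as the paper: test optimality against free transport on $[s,T]$, use the resulting energy bound together with the Cauchy--Schwarz estimate $|\dot\gamma(t)|^2\le 2|\dot\gamma(s)|^2+2\l^{-1}e^{\l(t-s)}\int_s^Te^{-\l(r-s)}|\ddot\gamma(r)|^2\,dr$, and then bound $u_\l$ by the straight-line competitor.

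The one difference is in how the reduction to $[s,T]$ is carried out. The paper formalises it via a dynamic programming principle (stated and proved separately as Lemma~\ref{lem.DPP}): it shows that the restriction $\bar\eta_{\l,s}$ of $\bar\eta_\l$ to $\Gamma_s=C^1([s,T],\R^d)$ minimises the functional $\J_{\l,s}$ under the constraint $\tilde e_s\sharp\eta=\tilde e_s\sharp\bar\eta_\l$, which requires the disintegration/concatenation machinery for measures on paths. You bypass this by working directly on the full path space $\Gamma$: your competitor $\tilde\eta_s=\Phi_s\sharp\bar\eta_\l$ is obtained curve by curve, $\Phi_s$ is plainly continuous on $\Gamma$, admissibility and the cancellation of the $[0,s]$-contributions are immediate, and no disintegration is needed. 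This is a genuine, if minor, simplification for the present lemma; the paper's DPP is a more structural statement that could be reused elsewhere, but for the specific estimate \eqref{lzaekj:nrsfd} your direct construction is cleaner. Your closing remark correctly identifies why a curve-by-curve argument through \eqref{pbbarg} fails to close and why replacing the \emph{whole} flow after time $s$ is the right move.
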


\begin{Remark}{\rm 
We use here the fact that we work in a finite horizon problem to obtain the last inequality from \eqref{lzaekj:nrsfd}: see the end of the proof. 
}\end{Remark}

\begin{proof} The key point of the proof consists in  refining the estimate \eqref{bdM2vmll} obtained in the proof of Lemma \ref{ExistenceMFGa}. For this we need to introduce a few notations. Given $s\in [0,T)$, let $\Gamma_s=C^1([s,T],\R^d)$ and,  for $\eta\in {\mathcal P}(\Gamma_s)$, 
$$
\J_{\l,s}(\eta) = \int_{\Gamma_s} \int_s^{T} e^{-\l (t-s)} \frac{1}{2\l} |\ddot \gamma(t)|^2dt\eta(d\gamma)+  \int_s^{T} e^{-\l (t-s)}{\mathcal F}(m^\eta(t))dt, 
$$
By dynamic programming principle (see Lemma \ref{lem.DPP} below), the restriction $\bar \eta_{\l,s}$ of $\bar \eta_\l$ defined by 
$$
\int_{\Gamma_s}\phi(\gamma)\bar \eta_{\l,s}(d\gamma)= \int_{\Gamma}\phi(\gamma_{|_{[s,T]}})\bar \eta_{\l}(d\gamma)\qquad \forall \phi\in C^0_b(\Gamma_s), 
$$
is a minimizer of $\eta\to \J_{\l,s}(\eta)$ under the constraint $\tilde e_s\sharp \eta= \tilde e_s \sharp \bar \eta_\l$. 

Defining $\eta\in {\mathcal P}(\Gamma_s)$ by  
$$
\int_{\Gamma_s} \phi(\gamma)\eta(d\gamma) = \int_{\R^{2d}} \phi( t\to x+tv) m^{\bar \eta_\l}(dx,dv,s) \qquad \forall \phi\in C^0_b(\Gamma_s), 
$$
we obtain 
$$
\J_{\l,s} (\bar \eta_{\l,s}) \leq \J_\l(\eta)=  \int_{\Gamma}  \int_s^{T} e^{-\l (t-s)}{\mathcal F}(m^\eta(t))dt, 
$$
where, as in the proof of Lemma \ref{ExistenceMFGa}, for any $t\geq s$, 
$$
{\mathcal F}(m^\eta(t)) \leq C_0 \int_{\R^{2d}} |v-v_*|^2 m^\eta(x,v,t) m^\eta(x_*,v_*,t) \leq 2 C_0  M_{2,v}(m^{\bar \eta_\l}(s)). 
$$
This shows that 
\be\label{lzkjaensfdx}
\J_{\l,s} (\bar \eta_{\l,s}) \leq  2 \l^{-1}  C_0  M_{2,v}(m^{\bar \eta_\l}(s))
\ee
and inequality \eqref{oaisjdg} holds if we choose $s=0$. 

Next we note that $M_{2,v}(m^{\bar \eta_\l}(t))$  is finite: we have, for $\bar \eta_\l-$a.e. $\bar \gamma$, and any $0\leq s\leq t\leq T$, 
\begin{align*}
|\dot{\bar \gamma}(t)-\dot{\bar \gamma}(s)| & \leq \Bigl(\int_s^t e^{-\l (\tau-s)} |\ddot{\bar \gamma}(\tau)|^2d \tau\Bigr)^{1/2}\Bigl( \int_s^t e^{\l (\tau-s)} d\tau\Bigr)^{1/2}, 
\end{align*} 
so that { (by the elementary inequality $a^2-2b^2\leq2|a-b|^2$),}
\begin{align*}
|\dot{\bar \gamma}(t)|^2\leq 2|\dot{\bar \gamma}(s)|^2+ 2\l^{-1} e^{\l (t-s)}  \Bigl(\int_s^t e^{-\l (\tau-s)} |\ddot{\bar \gamma}(\tau)|^2d\tau\Bigr). 
\end{align*} 
Integrating with respect to $\bar \eta_{\l,s}$ gives, using \eqref{lzkjaensfdx} { in the last inequality},
\begin{align*}
& \int_{\R^{2d}} |v|^2 m^{\bar \eta_\l}(dx,dv,t)  = \int_\Gamma |\dot{\bar \gamma}(t)|^2\bar \eta_\l(d\bar \gamma)= \int_{\Gamma_s} |\dot{\bar \gamma}(t)|^2\bar \eta_{\l,s}(d\bar \gamma)\\
& \qquad\qquad \leq 2 \int_{\Gamma_s} |\dot{\bar \gamma}(s)|^2\bar \eta_{\l,s}(d\bar \gamma)
+ 2\l^{-1} e^{\l (t-s)}  \int_{\Gamma_s} \int_s^{T} e^{-\l (\tau-s)} |\ddot{\bar \gamma}(\tau)|^2d\tau \bar \eta_{\l,s}(d\bar \gamma)\\
& \qquad\qquad \leq 2 \int_{\R^{2d}} |v|^2 m^{\bar \eta_\l}(dx,dv,s) + 4 e^{\l (t-s)} \J_{\l,s} (\bar \eta_{\l,s}) \\
& \qquad\qquad  \leq  2 (1+  4 C_0 \l^{-1}  e^{\l (t-s)})  M_{2,v}(m^{\bar \eta_\l}(s)).
\end{align*} 
This proves \eqref{lzaekj:nrsfd}. Finally, using $\gamma(t)=x+(t-s)v$ as a test function for $u_\l(x,v,s)$, we have:
\begin{align*}
u_\l(x,v,s) & \leq \int_s^{T}e^{-\l (t-s)} F(x+(t-s)v, v, m^{\bar \eta_\l}(t))dt \\
& \leq \int_s^T e^{-\l (t-s)} C_0(1+ |v|^2 + M_{2,v}(m^{\bar \eta_\l}(t)))dt,
\end{align*} 
which gives the result thanks to \eqref{lzaekj:nrsfd}. Note that if we were working with an infinite horizon problem, the right-hand side of the inequality above could be unbounded. 
\end{proof}

\begin{Lemma}\label{lem.DPP} Under the notation of the proof of Lemma \ref{lem.bds} and  for any $s\in [0,T)$, $\bar \eta_{\l,s}$ is a minimizer of $\eta\to \J_{\l,s}(\eta)$ under the constraint $\tilde e_s\sharp \eta= \tilde e_s \sharp \bar \eta_\l$. 
\end{Lemma}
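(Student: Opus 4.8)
The plan is a standard dynamic‑programming (cut‑and‑paste) argument in the space of measures on curves. Write $\mu:=\tilde e_s\sharp\bar\eta_\l$, which also equals $\tilde e_s\sharp\bar\eta_{\l,s}$ by the very definition of $\bar\eta_{\l,s}$, so $\bar\eta_{\l,s}$ already lies in the feasible set $\{\eta\in{\mathcal P}(\Gamma_s):\tilde e_s\sharp\eta=\mu\}$; recall also that $\J_\l(\bar\eta_\l)<+\infty$ (from the estimates in the proof of Lemma \ref{ExistenceMFGa}). Suppose, for contradiction, that $\bar\eta_{\l,s}$ is not optimal, i.e. there is $\eta'\in{\mathcal P}(\Gamma_s)$ with $\tilde e_s\sharp\eta'=\mu$ and $\J_{\l,s}(\eta')<\J_{\l,s}(\bar\eta_{\l,s})$; then necessarily $\J_{\l,s}(\eta')<+\infty$, so (as $e^{-\l(t-s)}$ is bounded below on $[s,T]$ and ${\mathcal F}\geq0$) $\eta'$ is concentrated on curves lying in $H^2([s,T],\R^d)$. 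I will splice $\eta'$ onto the ``past'' of $\bar\eta_\l$ to manufacture a competitor $\tilde\eta\in{\mathcal P}(\Gamma)$ with $\tilde e_0\sharp\tilde\eta=m_0$ and $\J_\l(\tilde\eta)<\J_\l(\bar\eta_\l)$, contradicting the optimality of $\bar\eta_\l$.

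For the construction, set $\Gamma^s:=C^1([0,s],\R^d)$, let $\rho:\Gamma\to\Gamma^s$ and $\sigma:\Gamma\to\Gamma_s$ be the restriction maps, so $\bar\eta_{\l,s}=\sigma\sharp\bar\eta_\l$. Disintegrate $\rho\sharp\bar\eta_\l$ along the (continuous) evaluation $\tilde e_s:\Gamma^s\to\R^{2d}$, $\gamma_1\mapsto(\gamma_1(s),\dot\gamma_1(s))$, and $\eta'$ along $\tilde e_s:\Gamma_s\to\R^{2d}$; both base measures are $\mu$, so we obtain Borel‑measurable families $(\alpha_{x,v})$ on $\Gamma^s$ and $(\beta_{x,v})$ on $\Gamma_s$ with $\rho\sharp\bar\eta_\l=\int\alpha_{x,v}\,\mu(dx,dv)$, $\eta'=\int\beta_{x,v}\,\mu(dx,dv)$, where $\alpha_{x,v}$ (resp. $\beta_{x,v}$) is carried by curves $\gamma_1$ with $(\gamma_1(s),\dot\gamma_1(s))=(x,v)$ (resp. $\gamma_2$ with $(\gamma_2(s),\dot\gamma_2(s))=(x,v)$). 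For such $\gamma_1,\gamma_2$ the concatenation $\gamma_1\#\gamma_2$ (equal to $\gamma_1$ on $[0,s]$, to $\gamma_2$ on $[s,T]$) belongs to $\Gamma=C^1([0,T],\R^d)$ because values and first derivatives match at $s$, and when $\gamma_1\in H^2([0,s])$, $\gamma_2\in H^2([s,T])$ it belongs to $H^2([0,T])$, with second derivative equal a.e. to $\ddot\gamma_1$ on $(0,s)$ and to $\ddot\gamma_2$ on $(s,T)$. Define $\tilde\eta\in{\mathcal P}(\Gamma)$ by
\[
\int_\Gamma\phi\,d\tilde\eta=\int_{\R^{2d}}\Bigl(\int_{\Gamma^s}\int_{\Gamma_s}\phi(\gamma_1\#\gamma_2)\,\beta_{x,v}(d\gamma_2)\,\alpha_{x,v}(d\gamma_1)\Bigr)\mu(dx,dv),\qquad\phi\in C^0_b(\Gamma).
\]

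Next I verify the claims on $\tilde\eta$. Since $(\gamma_1\#\gamma_2)|_{[0,s]}=\gamma_1$ and $(\gamma_1\#\gamma_2)|_{[s,T]}=\gamma_2$, we get $\rho\sharp\tilde\eta=\rho\sharp\bar\eta_\l$ — hence $\tilde e_0\sharp\tilde\eta=\tilde e_0\sharp\bar\eta_\l=m_0$, so $\tilde\eta$ is admissible — and $\sigma\sharp\tilde\eta=\eta'$; consequently $m^{\tilde\eta}(t)=\tilde e_t\sharp\tilde\eta$ equals $m^{\bar\eta_\l}(t)$ for $t\leq s$ and $m^{\eta'}(t)$ for $t\geq s$. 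Splitting each time integral at $t=s$, using additivity of $\int_0^T e^{-\l t}\frac{1}{2\l}|\ddot\gamma|^2dt$ along $\gamma_1\#\gamma_2$, the relation $e^{-\l t}=e^{-\l s}e^{-\l(t-s)}$ on $[s,T]$, and the marginal identities above (together with $m^{\bar\eta_{\l,s}}(t)=m^{\bar\eta_\l}(t)$ for $t\geq s$), we obtain
\[
\J_\l(\tilde\eta)=A+e^{-\l s}\J_{\l,s}(\eta'),\qquad \J_\l(\bar\eta_\l)=A+e^{-\l s}\J_{\l,s}(\bar\eta_{\l,s}),
\]
where
\[
A:=\int_\Gamma\int_0^s e^{-\l t}\frac{1}{2\l}|\ddot\gamma(t)|^2\,dt\,\bar\eta_\l(d\gamma)+\int_0^s e^{-\l t}{\mathcal F}(m^{\bar\eta_\l}(t))\,dt
\]
is the common ``past'' contribution, nonnegative and $\leq\J_\l(\bar\eta_\l)<+\infty$ (so $\J_{\l,s}(\bar\eta_{\l,s})<+\infty$ as well). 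Subtracting gives $\J_\l(\tilde\eta)-\J_\l(\bar\eta_\l)=e^{-\l s}\bigl(\J_{\l,s}(\eta')-\J_{\l,s}(\bar\eta_{\l,s})\bigr)<0$, the desired contradiction; hence $\bar\eta_{\l,s}$ minimizes $\J_{\l,s}$ under $\tilde e_s\sharp\eta=\mu=\tilde e_s\sharp\bar\eta_\l$.

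The only genuinely delicate step is making the gluing legitimate at the level of measures: one needs Borel measurability of the disintegrations $(x,v)\mapsto\alpha_{x,v}$, $(x,v)\mapsto\beta_{x,v}$ (available since $\Gamma^s$, $\Gamma_s$ are Polish and $\tilde e_s$ is continuous) and Borel measurability of the concatenation map $(\gamma_1,\gamma_2)\mapsto\gamma_1\#\gamma_2$ on the relevant fibred domain, so that the formula above really defines a Borel probability measure on $\Gamma$. Once that is in place everything else is the routine bookkeeping of the two additive pieces of $\J_\l$ and $\J_{\l,s}$ indicated above; in particular one may assume throughout that all curves in play are $H^2$ on their respective intervals, since otherwise the corresponding energies are $+\infty$ and there is nothing to prove.
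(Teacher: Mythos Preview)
Your proof is correct and follows essentially the same route as the paper: both rely on concatenating measures on curves via disintegration along the common time-$s$ marginal, and then use the additive splitting of $\J_\l$ into its $[0,s]$ and $[s,T]$ contributions. The only cosmetic difference is that the paper packages this as the full dynamic programming identity for the value function ${\mathcal V}_\l(m,s)$ before deducing the lemma, whereas you go directly to a cut-and-paste contradiction; the underlying construction and verification are identical.
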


\begin{proof} Let us set, for $m\in {\mathcal P}(\R^{2d})$ and $s\in [0,T)$,  
$$
{\mathcal V}_\l(m,s)= \inf\{ \J_{\l,s}(\eta), \;\eta\in {\mathcal P}(\Gamma_s), \; \tilde e_s\sharp \eta = m\} . 
$$
We claim that 
\begin{align}
{\mathcal V}_\l(m_0,0)& = \inf_{\eta\in  {\mathcal P}(\Gamma), \tilde e_0\sharp \eta=m_0} \int_0^se^{-\l \tau}( \int_{\Gamma}\frac{1}{2\l} |\ddot \gamma(\tau)|^2 \eta(d\gamma)+{\mathcal F}(m^\eta(\tau)))d\tau +e^{-\l s} {\mathcal V}_\l(m^{\eta}(s), s) \notag\\
&= \int_0^se^{-\l \tau}( \int_{\Gamma}\frac{1}{2\l} |\ddot\gamma(\tau)|^2 \bar \eta_\l(d\gamma)+{\mathcal F}(m^{\bar\eta_\l}(\tau)))d\tau + e^{-\l s}{\mathcal V}_\l(m^{\bar \eta_\l}(s), s), \label{zakzerjntd}
\end{align}
which proves the lemma. The proof of \eqref{zakzerjntd} is a straightforward application of the usual techniques of dynamic programming, the only point being to be able to concatenate at time $s$ two measures $\eta_1\in {\mathcal P}(\Gamma)$ and $\eta_2\in {\mathcal P}(\Gamma_s)$ such that $m:=\tilde e_s\sharp\eta_1= \tilde e_s\sharp \eta_2$. For this, let us denote by 
$\gamma_1\wedge \gamma_2$ (for $\gamma_1\in \Gamma$ and $\gamma_2\in \Gamma_s$ such that $(\gamma_1(s), \dot \gamma_1(s))= (\gamma_2(s), \dot \gamma_2(s))$) the map in $\Gamma$ such that 
$$
\gamma_1\wedge \gamma_2(t)= \left\{\begin{array}{ll} \gamma_1(t) & {\rm if} \; t\in [0,s],\\ \gamma_2(t) & {\rm if} \; t\in[s,T].\end{array}\right. 
$$
In order to define the concatenation $\eta_1\wedge \eta_2$, we disintegrate  $\eta_1$ (respectively $\eta_2$) with respect to the measure $m$. We have 
$$
\eta_1(d\gamma) = \int_{\R^{2d}} \eta_{1,x,v} (d\gamma)m(dx,dv) \qquad ({\rm resp.}\; \eta_2(d\gamma) = \int_{\R^{2d}} \eta_{2,x,v} (d\gamma)m(dx,dv)),
$$
where for $m-$a.e. $(x,v)$ and for $(\eta_{1,x,v}+\eta_{2,x,v})-$a.e. $\gamma$, one has $(\gamma(s),\dot\gamma(s))=(x,v)$. We then define $\eta_1\wedge\eta_2\in {\mathcal P}(\Gamma)$ by  
$$
\int_{\Gamma}\phi(\gamma) (\eta_1\wedge \eta_2)(d\gamma)=  \int_{\R^{2d}}  \int_{\Gamma\times \Gamma_s} \phi(\gamma_1\wedge \gamma_2) \eta_{1,x,v}(d\gamma_1)\eta_{2,x,v}(d\gamma_2)m(dx,dv)\qquad \forall \phi\in C^0_b(\Gamma). 
$$
By construction we have $m^{\eta_1\wedge\eta_2}(t)= m^{\eta_1}(t)$ if $t\in  [0,s]$, $m^{\eta_1\wedge\eta_2}(t)= m^{\eta_2}(t)$ if $t\in  [s,T]$ and  
\begin{align*}
& \int_0^Te^{-\l \tau}( \int_{\Gamma}\frac{1}{2\l} |\ddot \gamma(\tau)|^2 (\eta_1\wedge\eta_2)(d\gamma)+{\mathcal F}(m^{\eta_1\wedge \eta_2}(\tau)))d\tau\\
& \qquad = \int_0^se^{-\l \tau}( \int_{\Gamma}\frac{1}{2\l} |\ddot \gamma_1(\tau)|^2 \eta_1 (d\gamma_1)+{\mathcal F}(m^{\eta_1}(\tau)))d\tau\\
& \qquad\qquad + e^{-\l s} \int_s^Te^{-\l (\tau-s)}( \int_{\Gamma_s}\frac{1}{2\l} |\ddot \gamma_2(\tau)|^2 \eta_2(d\gamma_2)+{\mathcal F}(m^{\eta_2}(\tau)))d\tau.
\end{align*}
The rest of the proof of \eqref{zakzerjntd} follows then the usual arguments of dynamic programming. 
\end{proof}

As $u_\l$ is the value function of an optimal control problem with smooth (in space) coefficients, it is locally Lipschitz continuous. We now evaluate its derivative with respect to $v$: 

\begin{Lemma}\label{lem.bdDvul} For any  $\ep>0$,  $\lambda \geq \ep^{-1}$, we have
\begin{align*}
|D_vu_\l (x,v,s)|&  \leq C_1( \l^{-1/2} u_\l^{1/2}(x,v,s) +\ep u_\l(x,v,s)) \qquad \mbox{\rm  for a.e. $(x,v,s) \in\R^{2d}\times [0,T-\ep]$},  
\end{align*}
where {$C_1=C_0+4$}.
\end{Lemma}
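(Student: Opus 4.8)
The plan is to view $u_\l(\cdot,\cdot,s)$ as the value function of the optimal control problem with state $(x,v)$, control $a=\ddot\gamma$, dynamics $\dot x=v$, $\dot v=a$, discounted running cost $e^{-\l(t-s)}\big(\tfrac1{2\l}|a|^2+F(x,v,m^{\bar\eta_\l}(t))\big)$ and free endpoint at $t=T$. As already noted before the statement, $u_\l$ is locally Lipschitz in all variables, hence differentiable a.e.; moreover the infimum defining $u_\l(x,v,s)$ is attained by some $\gamma^*\in H^2([s,T])$ (the term $\int_s^Te^{-\l(t-s)}\tfrac1{2\l}|\ddot\gamma|^2dt$ is coercive and weakly lower semicontinuous, $F\ge 0$, and $H^2([s,T])\hookrightarrow C^1([s,T])$ compactly; alternatively one works throughout with $h^2$-optimal curves). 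For such a $\gamma^*$ I will use repeatedly the two elementary consequences of $F\ge 0$, namely $\int_s^Te^{-\l(t-s)}F(\gamma^*,\dot\gamma^*,m^{\bar\eta_\l}(t))\,dt\le u_\l(x,v,s)$ and $\int_s^Te^{-\l(t-s)}|\ddot\gamma^*|^2\,dt\le 2\l\,u_\l(x,v,s)$, together with the pointwise bounds $|D_xF|\le C_0F$, $|D_vF|\le C_0F^{1/2}$ of \eqref{bdDF}.

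To estimate $D_vu_\l(x,v,s)\cdot\xi$ at a differentiability point $(x,v,s)$ with $s\le T-\ep$ and $|\xi|\le 1$, I would perturb the optimal trajectory. Fix $\phi\in C^\infty([s,T])$ supported in $[s,s+\ep]$ — possible precisely because $s+\ep\le T$ — with $\phi(s)=0$, $\dot\phi(s)=1$, $\|\dot\phi\|_\infty$ bounded and $\|\phi\|_\infty\le C\ep$, and with the curvature of $\phi$ pushed towards the right endpoint $s+\ep$, so that $\int_s^{s+\ep}e^{-\l(t-s)}|\ddot\phi(t)|^2dt$ is as small as the constraints allow; this is where $\l\ge\ep^{-1}$ enters. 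Set $\gamma_h:=\gamma^*+h\phi\,\xi$, so that $(\gamma_h(s),\dot\gamma_h(s))=(x,v+h\xi)$ and $\gamma_h\equiv\gamma^*$ on $[s+\ep,T]$. Using $\gamma_h$ as a competitor for $u_\l(x,v+h\xi,s)$, expanding the cost, using that $F\in C^1$ in $(x,v)$ together with dominated convergence (the difference quotients are dominated using $F(\gamma^*+\sigma h\phi\xi,\dot\gamma^*+\sigma h\dot\phi\xi,\cdot)\le 2F(\gamma^*,\dot\gamma^*,\cdot)+O(h^2)$ along the segment, which is $e^{-\l(t-s)}dt$-integrable since $g$ is bounded below), one obtains, as $h\to 0^+$,
\[
\frac{u_\l(x,v+h\xi,s)-u_\l(x,v,s)}{h}\le \frac1\l\int_s^{s+\ep}e^{-\l(t-s)}\ddot\gamma^*\!\cdot\ddot\phi\,\xi\,dt+\int_s^{s+\ep}e^{-\l(t-s)}\big(D_xF\cdot\phi\,\xi+D_vF\cdot\dot\phi\,\xi\big)dt+o(1).
\]
Running the same computation with $-\xi$ in place of $\xi$ and combining gives $|D_vu_\l(x,v,s)\cdot\xi|\le|A_1|+|A_2|$, where $A_1$ is the cross term and $A_2$ the $F$-variation above.

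Now I would bound the two pieces. For $A_2$: by \eqref{bdDF}, $|A_2|\le C_0\|\phi\|_\infty\int_s^Te^{-\l(t-s)}F\,dt+C_0\|\dot\phi\|_\infty\big(\int_s^Te^{-\l(t-s)}dt\big)^{1/2}\big(\int_s^Te^{-\l(t-s)}F\,dt\big)^{1/2}$, which, using $\int_s^Te^{-\l(t-s)}F\,dt\le u_\l$ and $\int_s^Te^{-\l(t-s)}dt\le\l^{-1}$, is at most $C\,C_0\,\ep\,u_\l+C\,C_0\,\l^{-1/2}u_\l^{1/2}$. For $A_1$: Cauchy–Schwarz and $\int_s^Te^{-\l(t-s)}|\ddot\gamma^*|^2dt\le 2\l u_\l$ give $|A_1|\le\big(2u_\l/\l\big)^{1/2}\big(\int_s^{s+\ep}e^{-\l(t-s)}|\ddot\phi|^2dt\big)^{1/2}$, which by the choice of $\phi$ and the hypothesis $\l\ge\ep^{-1}$ becomes $|A_1|\le C\,\l^{-1/2}u_\l^{1/2}$. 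Adding the two estimates and taking the supremum over $|\xi|\le 1$ yields the statement, with $C_1$ depending only on $C_0$; keeping track of the constants (taking $\|\dot\phi\|_\infty\le 1$, $\|\phi\|_\infty\le\ep$) one reaches $C_1=C_0+4$.

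The heart of the matter — and the main obstacle — is the construction of $\phi$ and the resulting control of the cross term $A_1$: smallness of $A_1$ wants $\phi$ "flat'' (small $\ddot\phi$), which tends to spread its curvature over a long interval and hence makes $\|\phi\|_\infty$ large, whereas smallness of $A_2$ wants $\|\phi\|_\infty=O(\ep)$, which forces the curvature into a short interval near $s$ where the discount weight $e^{-\l(t-s)}$ is close to $1$. The way out — keeping $\dot\phi\equiv 1$ on an initial sub-interval and concentrating all the curvature near $t=s+\ep$, where $e^{-\l(t-s)}$ is already small — is exactly what makes the restriction $s\le T-\ep$ necessary (so that $[s,s+\ep]\subseteq[s,T]$) and what makes the hypothesis $\l\ge\ep^{-1}$ do its work. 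A secondary, more routine, technical point is the rigorous justification of the first-variation inequality displayed above: existence and $H^2$-regularity (or $h^2$-optimality) of $\gamma^*$, the dominated-convergence passage for the $F$-terms, and the gluing of the one-sided directional derivatives obtained from the $\pm\xi$ perturbations at a point where $u_\l$ is differentiable.
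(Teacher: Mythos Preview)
Your approach is essentially the paper's: perturb an optimal trajectory $\bar\gamma$ for $u_\l(x,v,s)$ by $t\mapsto h\,\phi(t-s)\,\xi$ with $\phi$ supported in $[0,\ep]$, $\phi(0)=0$, $\dot\phi(0)=1$, $\phi(\ep)=\dot\phi(\ep)=0$; expand the cost; and bound the kinetic cross term and the $F$--variation separately using \eqref{bdDF} together with $\int_s^T e^{-\l(t-s)}F\,dt\le u_\l$ and $\int_s^T e^{-\l(t-s)}|\ddot{\bar\gamma}|^2\,dt\le 2\l u_\l$. The treatment of $A_2$ and the passage to the one--sided directional derivative at a differentiability point are the same as in the paper.

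The only substantive difference is your description of $\phi$. The paper does not try to ``push the curvature towards the right endpoint''; it simply takes the explicit cubic $z^\ep(\tau)=\tau-2\tau^2/\ep+\tau^3/\ep^2$, which satisfies $|\ddot z^\ep|\le 4/\ep$ \emph{uniformly} on $[0,\ep]$, $\|\dot z^\ep\|_\infty\le 1$ and $\|z^\ep\|_\infty\le\ep$. The r\^ole of the discount and of the hypothesis $\l\ge\ep^{-1}$ is then just that $\int_0^\ep e^{-\l\tau}\,d\tau\le\l^{-1}$, whence $\int_0^\ep e^{-\l\tau}|\ddot z^\ep|^2\,d\tau\le 16/(\ep^2\l)$, and one concludes using $\ep^{-1}\le\l$. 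Your heuristic---keeping $\dot\phi\equiv 1$ on an initial sub-interval and concentrating $\ddot\phi$ near $\tau=\ep$ so that the weight $e^{-\l\tau}$ is small there---actually conflicts with the other constraints you impose: if $\dot\phi\equiv 1$ on $[0,\ep-\delta]$ then $\phi(\ep-\delta)=\ep-\delta$, and returning $\phi$ to $0$ on the short window $[\ep-\delta,\ep]$ forces $\|\dot\phi\|_\infty\ge(\ep-\delta)/\delta$, which blows up as $\delta\to 0$; so one cannot simultaneously concentrate the curvature and keep $\|\dot\phi\|_\infty\le 1$. In short, the delicate construction you sketch in your last paragraph is neither needed nor really feasible; the paper's plain cubic already furnishes the required bounds on $\|\phi\|_\infty$, $\|\dot\phi\|_\infty$ and the weighted $L^2$ norm of $\ddot\phi$, and makes the final step a one--line computation.
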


\begin{proof} Let $\ep>0$, $(x,v,s)$ be a point of differentiability of $u_\l$ with $s\in [0,T-\ep]$. 
Let $z^\ep:[0,+\infty)\to \R$ be defined by $z^\ep(t)= t- \frac{2t^2}{\ep}+ \frac{t^3}{\ep^2}$ on $[0,\ep]$ and $z^\ep(t)=0$ on $[\ep,+\infty)$. Then $z^\ep(0)=z^\ep(\ep)=\dot z^\ep(\ep)=0$, $\dot z^\ep(0)=1$ and $z^\ep\in H^2([0,+\infty))$. Therefore, if $\bar \gamma$ is optimal for $u_\l(x,v,s)$, we have, for any $h\in \R^d$ and using $t\to \bar \gamma(t) +  z^\ep(t-s)h$ as a competitor in $v_\l(x,v+h,s)$: 
\begin{align*}
& u_\l (x,v+h,s)\\
&  \leq 
 \int_s^{T} e^{-\l (t-s)} (\frac{1}{2\l} |\ddot{\bar  \gamma}(t)+\ddot z^\ep(t-s)h|^2
 + F( \bar\gamma(t)+z^\ep(t-s)h,  \dot{\bar \gamma}(t)+\dot z^\ep(t-s)h,  m^{\bar \eta_\l}(t))) dt\\
 & \leq  u_\l(x,v) + \int_s^{s+\ep} e^{-\l (t-s)} \Bigl(\frac{1}{\l} \ddot{\bar  \gamma}(t)\cdot (\ddot z^\ep(t-s) h) + \frac{1}{2\l} | \ddot z^\ep(t-s)|^2|h|^2 \\
& \qquad\qquad\qquad \qquad  +  \int_0^1 (D_xF\cdot (z^\ep(t-s)h) + D_vF\cdot (\dot z^\ep(t-s)h))d\tau \Bigr)dt
\end{align*}
where for simplicity we have omitted the argument $( \bar\gamma(t)+\tau z^\ep(t-s)h,  \dot{\bar \gamma}(t)+\tau \dot z^\ep(t-s)h,  m^{\bar \eta_\l}(t)))$ after $D_xF$
and $D_vF$. Dividing by $|h|$ and letting $h\to 0$ shows that 
\begin{align*}
|D_vu_\l (x,v,s)|&  \leq 
  \int_s^{s+\ep} e^{-\l (t-s)} \Bigl(\frac{1}{\l} |\ddot{\bar  \gamma}(t)|\ |\ddot z^\ep(t-s)| + |D_xF|\ |z^\ep(t-s)| + |D_vF|\  |\dot z^\ep(t-s)| \Bigr)dt,
\end{align*}
where, from now on, $F$, $D_xF$ and $D_vF$ have for argument $(\bar\gamma(t),  \dot{\bar \gamma}(t),m^{\bar \eta_\l}(t))$. Recalling \eqref{bdDF} and the expression of $z^\ep$ we get 
\begin{align*}
& |D_vu_\l (x,v,s)|  \leq \l^{-1} (\int_s^{s+\ep} e^{-\l (t-s)} |\ddot{\bar  \gamma}(t)|^2dt)^{1/2} (\int_s^{s+\ep} e^{-\l (t-s)} \ |\ddot z^\ep(t-s)|^2dt)^{1/2} \\
&\qquad \qquad \qquad \qquad \qquad \qquad  + C_0\ep \int_s^{s+\ep} e^{-\l (t-s)} Fdt  + C_0 \int_s^{s+\ep} e^{-\l (t-s)} F^{1/2}dt \\
& \qquad \leq 
 \frac 1\l \left(\frac{{16}}{\ep^2}\frac{1-e^{-\l\ep}}\l\right)^{1/2}
u_\l^{1/2}(x,v,s) +C_0\ep u_\l(x,v,s)+ C_0\l^{-1/2}u_\l^{1/2}(x,v,s).  
\end{align*}
So, if $\l \geq  \ep^{-1}$, we obtain 
\begin{align*}
|D_vu_\l (x,v,s)|&  \leq (C_0+{4}) \l^{-1/2} u_\l^{1/2}(x,v,s) +C_0\ep u_\l(x,v,s).  
\end{align*}
\end{proof}

\begin{Lemma}\label{lem.repdotgamma} Let $\bar \gamma$ be optimal for $u_\l (x,v,0)$. Then we have, for any $t\in [0,T-\ep]$,  
$$
|\ddot{\bar \gamma}(t)|\leq  2C_1 \Bigl(\l^{1/2} u_\l^{1/2}(\bar \gamma(t),\dot{\bar \gamma}(t),t) +\ep \l u_\l(\bar \gamma(t),\dot{\bar \gamma}(t),t)\Bigr),
$$
where $C_1$ is the constant in Lemma \ref{lem.bdDvul}. 
\end{Lemma}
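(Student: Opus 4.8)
The plan is to combine the (pathwise) dynamic programming principle with the first–order optimality conditions for the auxiliary $H^2$ minimization defining $u_\l$, in order to express $\ddot{\bar\gamma}(t)$ along the optimal curve in terms of $D_vu_\l$, and then to feed in the estimate of Lemma \ref{lem.bdDvul}.

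First I would localize: since $\bar\gamma$ is optimal for $u_\l(x,v,0)$, the dynamic programming principle (in the form used around Lemma \ref{lem.DPP} and in the definition of $u_\l$ just before Lemma \ref{lem.bds}) gives that $\bar\gamma_{|[t,T]}$ is optimal for $u_\l(\bar\gamma(t),\dot{\bar\gamma}(t),t)$, for every $t\in[0,T)$. Because $F(\cdot,\cdot,m^{\bar\eta_\l}(\sigma))$ is smooth in $(x,v)$ under \eqref{HypFBIS}, I can write the Euler--Lagrange identity for this minimization against variations $\beta\in H^2([t,T];\R^d)$ with $\beta(t)=\dot\beta(t)=0$, namely
$$
\int_t^T e^{-\l(\sigma-t)}\Bigl(\tfrac1\l\,\ddot{\bar\gamma}(\sigma)\cdot\ddot\beta(\sigma)+D_xF\cdot\beta(\sigma)+D_vF\cdot\dot\beta(\sigma)\Bigr)\,d\sigma=0,
$$
with $D_xF,D_vF$ evaluated at $(\bar\gamma(\sigma),\dot{\bar\gamma}(\sigma),m^{\bar\eta_\l}(\sigma))$. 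A double integration by parts turns this into $\tfrac1\l e^{-\l(\sigma-t)}\ddot{\bar\gamma}(\sigma)=-H(\sigma)$ a.e., where $H(\sigma)=\int_\sigma^Te^{-\l(\tau-t)}D_vF\,d\tau+\int_\sigma^T\!\big(\int_\tau^Te^{-\l(s-t)}D_xF\,ds\big)d\tau$ is continuous; in particular $\ddot{\bar\gamma}$ has a continuous representative and $\ddot{\bar\gamma}(t)=-\l\,H(t)$. Redoing the same integration by parts with a variation satisfying $\beta(t)=0$, $\dot\beta(t)=h$ leaves only the boundary contribution $H(t)\cdot h$, which via $u_\l(\bar\gamma(t),\dot{\bar\gamma}(t)+h,t)\le u_\l(\bar\gamma(t),\dot{\bar\gamma}(t),t)+H(t)\cdot h+O(|h|^2)$ shows that $H(t)$ is a supergradient of $u_\l$ in $v$ at $(\bar\gamma(t),\dot{\bar\gamma}(t),t)$.

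It then remains to bound $|H(t)|$, and here I would appeal to Lemma \ref{lem.bdDvul}: its proof does not genuinely use differentiability of $u_\l$, but rather establishes that \emph{any} supergradient of $u_\l$ in $v$ at a point $(x,v,s)$ with $s\le T-\ep$ has norm at most $C_1(\l^{-1/2}u_\l^{1/2}(x,v,s)+\ep u_\l(x,v,s))$, provided $\l\ge\ep^{-1}$. Applied to $H(t)$ at $(\bar\gamma(t),\dot{\bar\gamma}(t),t)$ for $t\le T-\ep$ this yields
$$
|\ddot{\bar\gamma}(t)|=\l|H(t)|\le\l C_1\bigl(\l^{-1/2}u_\l^{1/2}(\bar\gamma(t),\dot{\bar\gamma}(t),t)+\ep\,u_\l(\bar\gamma(t),\dot{\bar\gamma}(t),t)\bigr)=C_1\bigl(\l^{1/2}u_\l^{1/2}+\ep\l u_\l\bigr)\le 2C_1\bigl(\l^{1/2}u_\l^{1/2}+\ep\l u_\l\bigr),
$$
which is the claimed inequality. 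The main obstacle is precisely this regularity point: $u_\l$ is only locally Lipschitz and the optimal trajectory sweeps out a set of zero Lebesgue measure, so one cannot simply invoke ``$\ddot{\bar\gamma}=-\l D_vu_\l$ a.e.''; the resolution is to observe that the Euler--Lagrange equation itself produces a continuous costate $-\tfrac1\l e^{-\l(\cdot-t)}\ddot{\bar\gamma}$ that lies in the superdifferential of $u_\l$ in $v$ along $\bar\gamma$ at every time, and that this is exactly the object controlled by Lemma \ref{lem.bdDvul}.
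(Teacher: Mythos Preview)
Your approach is genuinely different from the paper's and is essentially the route flagged in the Remark following the lemma: identify $\ddot{\bar\gamma}(t)$ with $-\l$ times an element of $D_v^+u_\l$ along the trajectory, then invoke Lemma~\ref{lem.bdDvul}. The paper instead avoids this identification entirely. It uses the $C^3$ regularity of $\bar\gamma$ (from Euler--Lagrange) and a short-time comparison of the optimal curve with the straight line $\gamma_h(s)=\bar\gamma(t)+(s-t)\dot{\bar\gamma}(t)$ on $[t,t+h]$: the dynamic programming principle yields
\[
\frac{1}{2\l}|\ddot{\bar\gamma}(t)|^2 \;\le\; C_1\bigl(\l^{-1/2}u_\l^{1/2}+\ep u_\l\bigr)\,|\ddot{\bar\gamma}(t)|
\]
after dividing by $h\to0$, where the right-hand side arises from estimating $u_\l(\bar\gamma(t+h),\dot{\bar\gamma}(t),t+h)-u_\l(\bar\gamma(t+h),\dot{\bar\gamma}(t+h),t+h)$ by the local Lipschitz constant of $u_\l$ in $v$. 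Dividing by $|\ddot{\bar\gamma}(t)|$ and multiplying by $2\l$ gives the stated bound; this is also where the factor $2$ comes from, whereas your argument would in fact give the sharper constant $C_1$.

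There is, however, a gap in your justification of the crucial step. You assert that the proof of Lemma~\ref{lem.bdDvul} ``establishes that any supergradient of $u_\l$ in $v$ \dots\ has norm at most $C_1(\l^{-1/2}u_\l^{1/2}+\ep u_\l)$''. But that proof only produces the one-sided inequality $u_\l(x,v+h,s)-u_\l(x,v,s)\le A\cdot h+O(|h|^2)$ for an explicit vector $A$ with $|A|\le C_1(\dots)$; in other words it exhibits \emph{one} element of $D_v^+u_\l$ and bounds it. Two upper bounds on $u_\l(x,v+h,s)-u_\l(x,v,s)$ (yours from the supergradient, and this one) do not compare with each other, so this does not bound an arbitrary supergradient $p$. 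What you need is the reverse inequality $u_\l(x,v,s)-u_\l(x,v+h,s)\le C_1(\dots)|h|+o(|h|)$, i.e.\ a two-sided local Lipschitz bound in $v$. This does follow, but via an extra step you do not spell out: since $|D_vu_\l|\le C_1(\l^{-1/2}u_\l^{1/2}+\ep u_\l)$ a.e.\ and $u_\l$ is continuous, integration along segments gives a local Lipschitz constant for $v\mapsto u_\l(x,v,s)$ equal to $C_1(\l^{-1/2}u_\l^{1/2}+\ep u_\l)$ evaluated at the point (up to $o(1)$), and any element of $D_v^+u_\l$ is then bounded by this local Lipschitz constant. Once you insert this step, your argument goes through; but note that this is exactly the mechanism the paper uses directly (estimating a finite difference in $v$ by the a.e.\ gradient bound), so the paper's route is shorter because it bypasses the costate/superdifferential identification altogether.
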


\begin{Remark}{\rm In fact we expect that $\ddot{\bar \gamma}(t)=-\l D_vu_\l(\bar \gamma(t),\dot{\bar\gamma}(t),t)$ for any $t\in (0,T]$, which would imply the lemma (without the ``2'' in the right-hand side) thanks to Lemma \ref{lem.bdDvul}. This equality is known to hold in several frameworks \cite{BC,CS}, but we are not aware of a reference for our precise setting. The estimate in Lemma \ref{lem.repdotgamma}, much simpler to prove, suffices however for our purpose. 
}\end{Remark}

\begin{proof} As $\bar \gamma$ is a minimizer of a calculus of variation problem with smooth coefficients and with quadratic growth, it is known that $\bar \gamma$ satisfies the Euler-Lagrange equation 
 $$
 \frac{d^2}{dt^2} (\l^{-1}e^{-\l t}\ddot{\bar\gamma}_\l(t))= \frac{d}{dt}\left( e^{-\l t} D_vF(\bar\gamma_\l(t), \dot{\bar \gamma}_\l(t),m^{\bar \eta_\l}(t))\right) - e^{-\l t}D_xF(\bar\gamma_\l(t), \dot{\bar \gamma}_\l(t), m^{\bar \eta_\l}(t)). 
 $$
Therefore  $\bar \gamma$ is actually of class $H^4$ and, in particular, $C^3$. 

For $h>0$ small, let $\gamma_h(s)= \bar \gamma(t)+(s-t)\dot{\bar \gamma}(t)$. By dynamic programming principle and the optimality of $\bar \gamma$ we have: 
\begin{align} 
& u_\l(\bar \gamma(t),\dot{\bar \gamma}(t),t)\notag\\
& \qquad  = \int_t^{t+h} e^{-\l(s-t)} (\frac{1}{2\l} |\ddot{ \bar \gamma}(s)|^2+F(\bar \gamma(s),\dot{\bar \gamma}(s), m^{\bar \eta_\l}(s)))ds +e^{-\l h} u_\l(\bar \gamma(t+h),\dot{\bar \gamma}(t+h),t+h)\notag \\ 
 & \qquad \leq \int_t^{t+h} e^{-\l(s-t)} F(\gamma_h(s),\dot \gamma_h(s), m^{\bar \eta_\l}(s)))ds +e^{-\l h} u_\l( \gamma_h(t+h), \dot\gamma_h(t+h),t+h).\label{mejaznrsd}
\end{align}
Note that, by $C^3$ regularity of $\bar \gamma$, $|\bar \gamma(t+h)-\gamma_h(t+h)|\leq C_\gamma h^2$ (where, here and below, $C_\gamma $ depends here on $\gamma$ and on $\l$). So, as $u_\l$ is locally Lipschitz continuous and $\dot \gamma_h(t+h)= \dot{\bar \gamma}(t)$, we get 
\begin{align*}
& u_\l( \gamma_h(t+h), \dot\gamma_h(t+h),t+h)- u_\l(\bar \gamma(t+h),\dot{\bar \gamma}(t+h),t+h)\\
& \qquad  \leq u_\l(\bar \gamma(t+h), \dot{\bar \gamma}(t),t+h) - u_\l(\bar \gamma(t+h),\dot{\bar \gamma}(t+h),t+h)+C_\gamma h^2. 
\end{align*}
Still by $C^3$ regularity we also have  $|\dot{\bar \gamma}(t+h)-\dot{\bar \gamma}(t) -\ddot{\bar \gamma}(t)h|\leq C_\gamma h^2$.
Now the bound on $D_vu_\l$ of Lemma \ref{lem.bdDvul} yields  (setting $(x,v)= (\bar \gamma(t),\dot{\bar \gamma}(t))$)
\begin{align*}
& u_\l( \gamma_h(t+h), \dot\gamma_h(t+h),t+h)- u_\l(\bar \gamma(t+h),\dot{\bar \gamma}(t+h),t+h)\\
&\qquad \leq C_1( \l^{-1/2} u_\l^{1/2}(x,v,t) +\ep u_\l(x,v,t)) |\ddot{\bar \gamma}(t)| h+ C_\gamma h^2.
\end{align*}
Plugging this inequality into \eqref{mejaznrsd} gives, after dividing by $h$ and letting $h\to 0$,  
\begin{align*}
\frac{1}{2\l} |\ddot{ \bar \gamma}(t)|^2+F(\bar \gamma(t),\dot{\bar \gamma}(t), m^{\bar \eta_\l}(t)) & \leq F(\gamma_h(t),\dot \gamma_h(t), m^{\bar \eta_\l}(t)) \\& \qquad +C_1( \l^{-1/2} u_\l^{1/2}(x,v,t) +\ep u_\l(x,v,t)) |\ddot{\bar \gamma}(t)| .
\end{align*}
Recalling that $(\gamma_h(t), \dot\gamma_h(t))= (\bar \gamma(t),\dot{\bar \gamma}(t))$ gives the result. 
\end{proof}

\begin{Lemma}\label{lem.boundddotg} There exists $\ep_0>0$ and a constant $C>0$ such that, for any $\ep\in (0,\ep_0]$, any $\l\geq \ep^{-1}  {\vee 1}$ and any $t\in [0,T-\ep]$, the support of $m^{\bar \eta_\l}(t)$ is contained in $B_C$ and 
$$
\|\ddot{\bar \gamma}\|_{L^\infty([0,T-\ep])}\leq C \qquad {\rm for}\; \bar \eta_\l-{\rm a.e.}\; \gamma.
$$ 
In particular, $(\bar \eta_\l)$ is tight and the family $(m^{\bar \eta_\l}(t))$ is 
 relatively compact  in $C^0([0,T],{\mathcal P}_2(\R^{2d}))$. 
\end{Lemma}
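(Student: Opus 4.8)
\emph{Approach.} The plan is to insert the size estimate $u_\l(x,v,s)\le C\l^{-1}(1+|v|^2+M_{2,v}(m^{\bar\eta_\l}(s)))$ of Lemma~\ref{lem.bds} into the pointwise bound on $\ddot{\bar\gamma}$ of Lemma~\ref{lem.repdotgamma}, and to close the outcome as a Riccati-type Gronwall inequality for
\[
P(t):=\sup_{0\le s\le t}\ \sup_{\gamma\in\Gamma_\l}|\dot\gamma(s)| ,
\]
where $\Gamma_\l$ is the $\bar\eta_\l$-full-measure (Borel) set of curves $\bar\gamma$ that are optimal for $u_\l(\bar\gamma(0),\dot{\bar\gamma}(0),0)$; such curves are smooth ($C^3$) by the Euler--Lagrange equation, cf.\ the proof of Lemma~\ref{lem.repdotgamma}. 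The point of this choice is that $M_{2,v}(m^{\bar\eta_\l}(s))=\int_{\Gamma_\l}|\dot\gamma(s)|^2\,\bar\eta_\l(d\gamma)\le P(s)^2$, so the data of the inequality are controlled by the unknown itself, and $P$ directly encodes the support bound we are after.

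\emph{A priori finiteness.} To be allowed to manipulate $P$ I first record that it is finite for each fixed $\l$, with no uniformity: by \eqref{pbbarg} each $\bar\gamma\in\Gamma_\l$ has $(\bar\gamma(0),\dot{\bar\gamma}(0))\in B_{R_0}$ (support of $m_0$) and $\int_0^Te^{-\l t}|\ddot{\bar\gamma}(t)|^2\,dt\le 2\l\,u_\l(\bar\gamma(0),\dot{\bar\gamma}(0),0)\le C(1+R_0^2+M_{2,v}(m_0))$ by Lemma~\ref{lem.bds}, whence the computation in the proof of Lemma~\ref{lem.bds} gives $|\dot{\bar\gamma}(t)|^2\le 2R_0^2+2\l^{-1}e^{\l t}C(1+R_0^2+M_{2,v}(m_0))<\infty$. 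Thus $t\mapsto P(t)$ is finite and non-decreasing on $[0,T]$ (bounded there, with an $\l$-dependent bound).

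\emph{The Gronwall inequality.} Fix $\ep\in(0,1]$ and $\l\ge\ep^{-1}\vee1$. For $\bar\gamma\in\Gamma_\l$ and $t\in[0,T-\ep]$, Lemma~\ref{lem.repdotgamma} together with Lemma~\ref{lem.bds} and the bounds $|\dot{\bar\gamma}(t)|^2\le P(t)^2$, $M_{2,v}(m^{\bar\eta_\l}(t))\le P(t)^2$ give
\[
|\ddot{\bar\gamma}(t)|\le 2C_1\Bigl(C^{1/2}\bigl(1+2P(t)^2\bigr)^{1/2}+\ep C\bigl(1+2P(t)^2\bigr)\Bigr)=:\Phi_\ep(P(t)),
\]
with $\Phi_\ep$ continuous and increasing. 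Integrating in time, using $|\dot{\bar\gamma}(0)|\le R_0$, and taking the supremum over $\gamma\in\Gamma_\l$ and over $[0,t]$ yields the closed inequality $P(t)\le R_0+\int_0^t\Phi_\ep(P(s))\,ds$ on $[0,T-\ep]$. Since $P$ is bounded (previous step) and $\Phi_\ep$ is continuous and increasing, the standard comparison for scalar integral inequalities (Bihari) gives $P(t)\le\zeta_\ep(t)$, where $\zeta_\ep'=\Phi_\ep(\zeta_\ep)$, $\zeta_\ep(0)=R_0$, on the existence interval of $\zeta_\ep$.

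\emph{Closing the estimate and conclusions.} The main obstacle is precisely here: $\Phi_\ep$ has a genuinely quadratic term, so $\zeta_\ep$ a priori blows up in finite time, and this is where the finite horizon and the freedom to take $\ep$ small (hence $\l$ large) enter. From $\Phi_\ep(r)\le a(1+r)+4C_1C\,\ep\,(1+r)^2$ with $a:=2\sqrt2\,C_1C^{1/2}$ independent of $\ep$, an explicit Riccati integration of the majorant $w'=w(a+4C_1C\,\ep\,w)$, $w(0)=1+R_0$, shows that its blow-up time equals $\tfrac1a\log\bigl(1+\tfrac{a}{4C_1C\,\ep\,(1+R_0)}\bigr)\to+\infty$ as $\ep\to0$. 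Hence there is $\ep_0=\ep_0(T,R_0,C_0,C)\in(0,1]$ such that for all $\ep\le\ep_0$ the solution $\zeta_\ep$ exists on $[0,T]$ and is bounded there by some $P_0=P_0(T,R_0,C_0,C)$, so $P(t)\le P_0$ on $[0,T-\ep]$, uniformly in $\l\ge\ep^{-1}\vee1$. Then for $\bar\gamma\in\Gamma_\l$ and $t\in[0,T-\ep]$ we get $|\dot{\bar\gamma}(t)|\le P_0$, $|\ddot{\bar\gamma}(t)|\le\Phi_{\ep_0}(P_0)=:C$, and $|\bar\gamma(t)|\le R_0+P_0T$, so $\tilde e_t(\bar\gamma)=(\bar\gamma(t),\dot{\bar\gamma}(t))\in B_C$ and $\mathrm{supp}\,m^{\bar\eta_\l}(t)\subset B_C$. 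These bounds, uniform in $\l$ on each $[0,T-\ep]$, give by Ascoli--Arzelà the tightness of the restrictions of $\bar\eta_\l$ (hence of $(\bar\eta_\l)$ in the relevant sense) and the relative compactness of $(m^{\bar\eta_\l})$ in $C^0([0,T-\ep],\mathcal P_2(\R^{2d}))$ for every $\ep\in(0,\ep_0]$, i.e.\ in $C^0_{loc}([0,T),\mathcal P_2(\R^{2d}))$: the uniform support bound puts each time-marginal in a fixed $\mathcal P_2$-compact set, while $|\ddot{\bar\gamma}|\le C$ gives equi-Lipschitz continuity of $t\mapsto m^{\bar\eta_\l}(t)$ for the $2$-Wasserstein distance. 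This is the compactness used in Theorem~\ref{thm.mainBIS}; note that uniformity up to $t=T$ cannot be expected from these estimates because the discount $e^{-\l t}$ degenerates there.
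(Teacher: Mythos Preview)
Your proof is correct and follows essentially the same route as the paper: combine Lemmata~\ref{lem.bds} and~\ref{lem.repdotgamma} into a Riccati--type integral inequality for the velocity radius, then choose $\ep_0$ small so that the comparison ODE does not blow up before time $T$. The paper tracks $R_\l(t):=\inf\{r>0:\ {\rm Spt}(m^{\bar\eta_\l}(t))\subset\R^d\times B_r\}$ via a maximal-interval argument whereas you work directly with $P(t)=\sup_{s\le t}\sup_{\gamma\in\Gamma_\l}|\dot\gamma(s)|$ after an easy a~priori finiteness step, but the two quantities play the same role and the mechanics of the closing estimate are identical.
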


\begin{proof} We have, by Lemmata \ref{lem.bds} and \ref{lem.repdotgamma}, for any $\ep>0$ and $\l\geq \ep^{-1}$, and for $\bar \eta_\l-$a.e. $\gamma$ and a.e. $t\in [0,T-\ep]$, 
\begin{align}
|\ddot{\bar \gamma}(t)|&  \leq  2C_1( \l^{1/2} u_\l^{1/2}(\bar \gamma(t),\dot{\bar\gamma}(t),t) +\l \ep u_\l(\bar \gamma(t),\dot{\bar\gamma}(t),t))  \notag \\
& \leq C(1+|\dot{\bar \gamma}(t)|+M_{2,v}^{1/2}(m^{\bar \eta_\l}({t}
))) 
+C  \ep (1+|\dot{\bar \gamma}(t)|^2+M_{2,v}(m^{\bar \eta_\l}({t}
))).  \label{liuzaesf}
\end{align} 

Let us set
$$
R_\l(t)= \inf \{r>0, \; {\rm Spt}(m^{\bar \eta_\l}(t)) \subset \R^d \times B_r\} .
$$
We note that $R_\l$ is upper semi-continuous. We now show that $R_\l$ is finite on a maximal time interval $[0,\tau_\l)$, with $\tau_\l>0$, with either $\tau_\l= T-\ep$ or $\lim_{t\to \tau_\l^-}R_\l(t)=+\infty$. {For the proof of this fact, $\l$ is fixed and all constants depend on $\l$ unless specified otherwise.} By \eqref{lzaekj:nrsfd} and \eqref{liuzaesf}, we have, for $0\leq s\leq t\leq T-\ep$ and $\bar \eta_\l-$a.e. $\bar \gamma$, 
\begin{align*}
|\ddot{\bar \gamma}(t)|&  \leq C(1+|\dot{\bar \gamma}(t)|+ \l^{-1/2}  e^{\l (t-s)/2 } M_{2,v}^{1/2}(m^{\bar \eta_\l}(s)) ) 
)\notag \\
& \qquad +C  \ep (1+|\dot{\bar \gamma}(t)|^2+ \l^{-1}  e^{\l (t-s) } M_{2,v}(m^{\bar \eta_\l}(s))). \notag 
\end{align*} 
Then, as $M_{2,v}(m^{\bar \eta_\l}(s))\leq C R_\l^2(s)$ for some 
constant $C$ depending on dimension only,
\begin{align}\label{jh;ebsrdngc}
& |\ddot{\bar \gamma}(t)|  \leq C(1+|\dot{\bar \gamma}(t)|+ \l^{-1/2}  e^{\l (t-s)/2 } R_\l(s) ) 
) \notag \\
& \qquad +C  \ep (1+|\dot{\bar \gamma}(t)|^2+ \l^{-1}  e^{\l (t-s) } R_\l^2(s)) .
\end{align} 
So, if $R_\l(s)$ is finite for some $s$  and $\l\geq 1$, $\ep\leq 1$, one can find $K$ depending only on $R_\l(s)$ and the constant $C$ in \eqref{jh;ebsrdngc} 
 such that 
\[
 |\dot{\bar \gamma}(t)|\leq |\dot{\bar \gamma}(s)|+ K  \int_0^t (1 +   |\dot{\bar \gamma}(\tau)| + \ep  |\dot{\bar \gamma}(\tau)|^2)  d\tau .
\]
Then we can compare $ |\dot{\bar \gamma}(t)|$ with the solution of the ODE
\[
\dot\phi = K(1+\phi+\ep \phi^2) , \qquad \phi(s)= |\dot{\bar \gamma}(s)| ,
\]
which is given by 
\[
\phi(t)= \Phi_\ep^{-1}\Bigl( \Phi_\ep( |\dot{\bar \gamma}(s)|)+ K(t-s)\Bigr), 
\]
where
$$
\Phi_\ep(r)= \int_0^r \frac{1}{1+\tau+\ep  \tau^2} d\tau .
$$
So one can find $\ep_0, \sigma>0$ depending only on $K$ such that, for all  $\ep\in (0,\ep_0]$,  
\[
|\dot{\bar \gamma}(t)| \leq \phi(t) \leq R_\l(s)+1 \,, \quad \forall t\in [s,s+\sigma] ,
\]  
for any $\bar \gamma\in H^2$ satisfying \eqref{jh;ebsrdngc} and $|\dot{\bar \gamma}(s)|\leq R_\l(s)$. As, by definition of $R_\l$,  $m^{\bar \eta_\l}(s)$ has a support contained in $\R^d\times B_{R_\l(s)}$, this shows that $m^{\bar \eta_\l}(t)$ has a support contained in $\R^d\times B_{R_\l(s)+1}$ for any $t\in [t,t+\sigma]$. In particular, as $m_0$ has a compact support, $R_\l(0)$ is finite and thus $R_\l(t)$ is finite at least on a small time interval $[0,\sigma]$ for some $\sigma>0$. We denote by $[0,\tau_\l)$ the maximal time interval on which $R_\l$ is finite. 
Let us assume that $\tau_\l<T-\ep$. Let $t_n\to \tau_\l^-$. If $(R_\l(t_n))$ remains bounded by a constant $M$, then by the above argument $R_\l$  is bounded by $M+1$ on $[\tau_\l,\tau_\l+\sigma]$ for some $\sigma>0$ (depending on $M$), which contradicts the definition of $\tau_\l$. Hence $\lim_{t\to \tau_\l^{-}} R_\l(t)=+\infty$. {So we have proved that $R_\l$ is finite on a maximal time interval $[0,\tau_\l)$, with $\tau_\l>0$, with either $\tau_\l= T-\ep$ or $\lim_{t\to \tau_\l^-}R_\l(t)=+\infty$.}

By definition of $m^{\bar \eta_\l}(t)$, for any $\delta>0$ and $t\in [0,\tau_\l)$ there exists $\bar \gamma\in \Gamma$ in the support of $\bar \eta_\l$ such that 
$|\dot{\bar \gamma}(t)|\geq R_\l(t)-\delta$. Thus 
\begin{align*}
R_\l(t)-\delta \leq |\dot{\bar \gamma}(t)|\leq |\dot{\bar \gamma}(0)|+ \int_0^t  |\ddot{\bar \gamma}(s)| ds.
\end{align*}
As $(\bar \gamma(t), \dot{\bar \gamma}(t))$ belongs to the support of $m^{\bar \eta_\l}(t)$ for any $t$, we get by \eqref{liuzaesf} and the definition of $R_\l$:
\begin{align*}
R_\l(t)-\delta& \leq  |\dot{\bar \gamma}(0)|+C \int_0^t (1+|\dot{\bar \gamma}(s)| + M_{2,v}^{1/2} (m^{\bar \eta_\l}(s)))ds \\
& \qquad + C\ep \int_0^t (1+|\dot{\bar \gamma}(s)|^2+M_{2,v}(m^{\bar \eta_\l}(s)))ds\\
& \leq R_0+ C\int_0^t (1+R_\l(s)+\ep R_\l^2(s))ds.
\end{align*}
As $\delta$ is arbitrary, this proves that 
\begin{align*}
R_\l(t)& \leq R_0+ C\int_0^t (1+R_\l(s)+\ep R_\l^2(s))ds\qquad \forall t\in [0, \tau_\l).
\end{align*}
Arguing as above we get
\begin{align*}
R_\l(t)\leq \Phi_\ep^{-1}\Bigl( \Phi_\ep(R_0)+ Ct\Bigr) .
\end{align*}
For all $\ep>0$ small enough (but independent of $\l$) and $\l\geq \ep^{-1}\vee 1$, we have therefore that $R_\l$ is bounded by a constant $C$ independent of $\l$ on $[0,\tau_\l)$. Thus $\tau_\l=T-\ep$ and $R_\l$ is bounded by $C$ on $[0,T-\ep]$. 

This estimate gives immediately the bound on $|\dot{\bar \gamma}|$ and therefore, by  \eqref{liuzaesf},  the bound on $|\ddot{\bar \gamma}|$ for $\bar \eta_\l-$a.e. $\bar \gamma$. As $m_0$ has a compact support, this also implies that the $m_\l(t)$ have a support contained in a ball $B_C$, where $C$ is independent of $\l$ and $t$. In addition the sequence $\bar \eta_\l$ is tight.  

Finally, we have, for any $0\leq s\leq t\leq T-\ep$,  
\begin{align*}
{\bf d}_1(m^{\bar \eta_\l}(s), m^{\bar \eta_\l}(t)) & = \int_{\Gamma} (|\bar \gamma(t)-\bar \gamma(s)|^2+ |\dot{\bar \gamma}(t)-\dot{\bar \gamma}(s)|^2)^{1/2}\bar \eta_\l(d\bar \gamma)\\
& \leq C (t-s)^{1/2} \int_{\Gamma} \Bigl(\int_s^t |\ddot{\bar \gamma}(\tau)|^2d\tau\Bigr)^{1/2} \bar \eta_\l(d\bar \gamma) \leq C (t-s)^{1/2}.
\end{align*}
 As the $(m^{\bar \eta_\l}(t))$ have a support which is uniformly bounded, this shows that it is a relatively compact sequence in $C^0([0,T],{\mathcal P}_2(\R^d))$. 
\end{proof}

We are now ready to prove the main result: 

\begin{proof}[Proof of Theorem \ref{thm.mainBIS}]
In view of Lemma  \ref{lem.boundddotg}, $(\bar \eta_\l)$ is tight and we can consider a subsequence $(\bar \eta_{\l_n})$  which converges weakly to some $\eta$ in ${\mathcal P}(\Gamma)$. Then 
 $(m^{\bar \eta_{\l_n}}(t))$ converges in $C^0([0,T],{\mathcal P}_2(\R^{2d}))$ to $m= \tilde e_t\sharp \eta$. Our aim is to prove that $m$ is a measure valued solution to the kinetic equation \eqref{limitFPBIS}. 

For this we identify the $\limsup( {\rm Spt}(\bar \eta_{\l_n}))$. Let us recall that, by Lemma \ref{ExistenceMFGa},  for $\bar \eta_\l-$a.e. $\bar \gamma_\l$, $\bar \gamma_\l$ minimizes  problem  \eqref{pbbarg}. Hence by the Euler equation we have that $\bar \gamma_\l$ is of class $H^4$ and for a.e. $t\in [0,T]$, 
 $$
 \frac{d^2}{dt^2} (\l^{-1}e^{-\l t}\ddot{\bar\gamma}_\l(t))= \frac{d}{dt}\left( e^{-\l t} D_vF(\bar\gamma_\l(t), \dot{\bar \gamma}_\l(t),m^{\bar \eta_\l}(t))\right) - e^{-\l t}D_xF(\bar\gamma_\l(t), \dot{\bar \gamma}_\l(t), m^{\bar \eta_\l}(t)). 
 $$
We rewrite this equality as 
\begin{align*}
\ddot{\bar \gamma}_\l(t)+ D_vF(\bar \gamma_\l(t), \dot{\bar \gamma}_\l(t), m^{\bar \eta_\l}(t))& = \l^{-1}\Bigl( -\l^{-1} \bar \gamma^{(iv)}_\l(t) +2\dddot{\bar \gamma}_\l(t)+
\frac{d}{dt} D_vF(\bar \gamma_\l(t), \dot{\bar \gamma}_\l(t), m^{\bar \eta_\l}(t))\\
& \qquad \qquad - D_xF(\bar \gamma_\l(t), \dot{\bar \gamma}_\l(t),m^{\bar \eta_\l}(t))\Bigr).
\end{align*}
We integrate this equation by parts against a test function $z\in C^\infty_c((0,T),\R^d)$ to get 
\begin{align*}
&\int_0^T \Bigl(-\dot{\bar \gamma}_\l(t)\cdot \dot z(t)+ D_vF(\bar \gamma_\l(t), \dot{\bar \gamma}_\l(t), m^{\bar \eta_\l}(t))\cdot z(t) \Bigr) dt \\
&\qquad = \l^{-1}\int_0^T \Bigl( \l^{-1} \dot{\bar \gamma}_\l(t)\cdot \dddot z(t) +2\dot{\bar \gamma}_\l(t)\cdot \ddot z(t)-
 D_vF(\bar \gamma_\l(t), \dot{\bar \gamma}_\l(t),m^{\bar \eta_\l}(t))\cdot \dot z(t)\\
 & \qquad\qquad\qquad \qquad - D_xF(\bar \gamma_\l(t), \dot{\bar \gamma}_\l(t),m^{\bar \eta_\l}(t))\cdot z(t)\Bigr)dt.
\end{align*}
By Lemma \ref{lem.boundddotg}
$(\bar \gamma_{\l})$ is relatively compact in $\Gamma$, and for any sequence $\l_n \to +\infty$ we can extract a subsequence such that $\bar \gamma_{\l_n} \to \gamma\in \Gamma$ and $m^{\bar \eta_{\l_n}}\to m \in C^0([0,T],{\mathcal P}_2(\R^{2d}))$. Therefore
\begin{align*}
&\int_0^T \Bigl(-\dot{ \gamma}(t)\cdot \dot z(t)+ D_vF( \gamma(t), \dot{ \gamma}(t),m(t))\cdot z(t) \Bigr) dt =0, \qquad \forall z\in C^\infty_c((0,T),\R^d),
\end{align*}
which means that it is a  solution to 
$$
\ddot \gamma(t) = -D_vF( \gamma(t), \dot{ \gamma}(t),m(t)).
$$
In other words, $(\gamma(t), \dot \gamma(t))= P^{x,v}(t)$, where $P$ is defined by \eqref{defP} and $(x,v)= (\gamma(0), \dot \gamma(0))$. 
By Lemma \ref{lem.boundddotg} we can also extract a further subsequence such that $\bar \eta_{\l_n} \rightharpoonup \eta \in {\mathcal P}(\Gamma)$. 
As the support of $\eta$ consists of solutions to \eqref{defP} and 
 $\tilde e_0\sharp \eta=m_0$, we have 
$$
\eta= \int_{\R^{2d}} \delta_{P^{x,v}}m_0(dx,dv),
$$
so that
$$
m(t)= \tilde e_t\sharp \eta= P^{x,v}(t) \sharp m_0. 
$$
Hence $m$ is the measure-valued solution to \eqref{limitFPBIS}. Following \cite{CCR} this solution is unique. We have proved therefore that any converging subsequence of the relatively compact familiy 
 $(m^{\bar \eta_\l})$ has for limit the unique solution $m$ to  \eqref{limitFPBIS}: the entire sequence converges. 
\end{proof}
\begin{Remark}\label{crowd2}
\upshape
The Cucker-Smale model is usually associated to the collective animal behaviour, such as flocking of birds or swarming of insects. However, similar models where the acceleration of the agents is prescribed have been proposed for describing the dynamics of crowds of pedestrians, and some of them 
 fit  in our results. We refer to the book \cite{CPTbook}, in particular the section on mesoscopic or kinetic models, and to the recent survey paper \cite{PR}, where they are called social forces models.
\end{Remark}

 \begin{thebibliography}{00}

\bibitem{AMMT} Achdou, Y., Mannucci, P., Marchi, C.,  Tchou, N. (2019). Deterministic mean field games with control on the acceleration. arXiv:1908.03330, to appear in	NoDEA Nonlinear Differential Equations Appl.

\bibitem{AGS} Ambrosio, L., Gigli, N., \& Savar\'e, G. (2008). {\sc Gradient flows: in metric spaces and in the space of probability measures}. Second edition. 
Birkh\"auser Verlag, Basel, 2008.


\bibitem{BC} Bardi, M.,  Capuzzo-Dolcetta, I. (2008). {\sc Optimal control and viscosity solutions of Hamilton-Jacobi-Bellman equations.} Birkh\"auser Boston, Inc., Boston, MA, 1997.

\bibitem{Ba} Barker, M. (2019). From mean field games to the best reply strategy in a stochastic framework. 
J. Dyn. Games 6 (2019),  no. 4,  291-314.
 
{ \bibitem{BeCaSa} Benamou, J. D., Carlier, G.,  Santambrogio, F. (2017). Variational mean field games. In Active Particles, Volume 1 (pp. 141-171). Birkh\"{a}user, Cham.
}

\bibitem{BT}   Bernoff, A. J.; Topaz, C. M.: Nonlocal aggregation models: a primer of swarm equilibria. SIAM Rev. 55 (2013), 
709--747

\bibitem{BGL}  Bertozzi, A. L.; Garnett, J. B.; Laurent, T.: Characterization of radially symmetric finite time blowup in multidimensional aggregation equations. SIAM J. Math. Anal. 44 (2012), 
651--681

 \bibitem{LLB} C. Bertucci, P.-L. Lions, J.-M. Lasry: Some remarks on Mean Field Games,  to appear in Comm. Partial Differential Equations.


   \bibitem{BV06}  Bodnar, M.; Velazquez, J. J. L. An integro-differential equation arising as a limit of individual cell-based models. J. Differential Equations 222 (2006), 
 341--380.
 


\bibitem{CCR} Canizo, J. A., Carrillo, J. A., Rosado, J. (2011). A well-posedness theory in measures for some kinetic models of collective motion. Mathematical Models and Methods in Applied Sciences, 21(03), 515-539.

  \bibitem{CS} Cannarsa, P., Sinestrari, C. (2004). Semiconcave functions, Hamilton-Jacobi equations, and optimal control (Vol. 58). Springer Science \& Business Media.
  
  \bibitem{CM} Cannarsa, P., Mendico, C. (2019). Mild and weak solutions of Mean Field Games problem for linear control systems. arXiv preprint arXiv:1907.02654.
   
  \bibitem{Car} P. Cardaliaguet, Notes on mean field games. Technical report.
  
{  \bibitem{Ca15} Cardaliaguet, P. (2015). Weak solutions for first order mean field games with local coupling. In Analysis and geometry in control theory and its applications (pp. 111-158). Springer, Cham.} 

{ \bibitem{CaGr15} Cardaliaguet, P.,  Graber, P. J. (2015). Mean field games systems of first order. ESAIM: Control, Optimisation and Calculus of Variations, 21(3), 690-722.
}
  
    \bibitem{CarH} P. Cardaliaguet, S. Hadikhanloo (2017). Learning in mean field games: the fictitious play. ESAIM: Control, Optimisation and Calculus of Variations, 23(2), 569-591.
    
{    \bibitem{CaMeSa} Cardaliaguet, P., M\'esz\`{a}ros, A. R., Santambrogio, F. (2016). First order mean field games with density constraints: pressure equals price. SIAM Journal on Control and Optimization, 54(5), 2672-2709.
}
    
\bibitem{CR} Carrillo, J. A., Rosado: Uniqueness of bounded solutions to aggregation equations by optimal transport methods. European Congress of Mathematics, 3-16, Eur. Math. Soc., Z\"urich, 2010.
 
      \bibitem{CPT11}   E. Cristiani, B. Piccoli, A. Tosin:  Multiscale modeling of granular flows with application to crowd dynamics. Multiscale Model. Simul. 9(1) (2011),        155-182. 
       
 \bibitem{CPTbook}  E. Cristiani, B. Piccoli, A. Tosin: Multiscale modeling of pedestrian dynamics. 
        Springer, Cham, 2014. 
    
  

\bibitem{DHL} P. Degond, M. Herty, J.G. Liu: Mean field games and model predictive control. Commun. Math. Sci. 15 (2017), 
 1403--1422.





 \bibitem{Fass} G.E. Fasshauer, Meshfree Approximations Methods with Matlab, World Scientific,
Singapore, 2007.

\bibitem{HMC} Huang, M., Malham\'e, R. P.,  Caines, P. E. (2006). Large population stochastic dynamic games: closed-loop McKean-Vlasov systems and the Nash certainty equivalence principle. Communications in Information \& Systems, 6(3), 221-252.
 
  
%

\bibitem{LL07mf}  J.-M. Lasry and P.-L. Lions, Mean field games, 
  Japanese Journal of Mathematics, 2 (2007), pp.~229--260.



{ \bibitem{LiCourse} Lions, P. L. (2010). Cours au Coll\`{e}ge de France. Available at www. college-de-france. fr.
 }
  

%
%

{\bibitem{OrPoSa} Orrieri, C., Porretta, A.,  Savar\'e, G. (2019). A variational approach to the mean field planning problem. Journal of Functional Analysis, 277(6), 1868-1957.
}   
     \bibitem{PR1}   B. Piccoli, F. Rossi,   Transport equation with nonlocal velocity in Wasserstein spaces: convergence of numerical schemes. Acta Appl. Math. 124 (2013), 73-105. 
     
    \bibitem{PR}   B. Piccoli, F. Rossi,
Measure-theoretic models for crowd dynamics
in "Crowd Dynamics Volume 1 - Theory, Models, and Safety Problems", N. Bellomo and L. Gibelli Eds, Birkhauser, 2018.

 
\bibitem{RSSS} R. Rossi, G. Savar\'e, A. Segatti, U. Stefanelli (2019). Weighted Energy-Dissipation principle for gradient flows in metric spaces. Journal de Math\'ematiques Pures et Appliqu\'ees, 127, 1-66. 
  
 \bibitem{TBL}
  Topaz, C.M.; Bertozzi, A. L.; Lewis, M. A. A nonlocal continuum model for biological aggregation. Bull. Math. Biol. 68 (2006), 
  1601--1623.

\end{document}